\documentclass[11pt]{article}

\usepackage{amsmath,amssymb,amsfonts,amsthm,mathabx,enumitem}
\usepackage[pdftex]{graphicx}

\addtolength{\textheight}{1.1in}
\addtolength{\topmargin}{-.6in}
\addtolength{\textwidth}{1.5in}
\addtolength{\oddsidemargin}{-.75in}
\addtolength{\evensidemargin}{-.75in}

\setcounter{topnumber}{1}

\numberwithin{equation}{section}

\newtheorem{thm}{Theorem}[section]
\newtheorem{prp}[thm]{Proposition}
\newtheorem{lmm}[thm]{Lemma}

\newtheorem{crl}[thm]{Corollary}

\theoremstyle{definition}
\newtheorem{dfn}[thm]{Definition}

\def\BE#1{\begin{equation}\label{#1}}
\def\EE{\end{equation}}
\def\eref#1{(\ref{#1})}
\def\ov#1{\overline{#1}}
\def\tn#1{\textnormal{#1}}
\def\wt#1{\widetilde{#1}}
\def\wh#1{\widehat{#1}}

\def\sf#1{\textsf{#1}}

\def\lr#1{\langle{#1}\rangle}
\def\flr#1{\left\lfloor{#1}\right\rfloor}

\def\lra{\longrightarrow}

\def\ga{\gamma}

\def\si{\sigma}

\def\vph{\varphi}
\def\vt{\vartheta}

\def\ze{\zeta}

\def\Ga{\Gamma}
\def\La{\Lambda}

\def\Si{\Sigma}
\def\Th{\Theta}

\def\fa{\mathfrak a}
\def\C{\mathbb C}
\def\cC{\mathcal C}
\def\fc{\mathfrak c}

\def\bI{\mathbb I}
\def\fI{\mathfrak i}
\def\fj{\mathfrak j}

\def\P{\mathbb P}
\def\R{\mathbb R}

\def\Z{\mathbb Z}

\def\Aut{\textnormal{Aut}}
\def\tnd{\textnormal{d}}
\def\id{\textnormal{id}}
\def\Fr{\textnormal{Fr}}
\def\Hom{\textnormal{Hom}}
\def\Id{\textnormal{Id}}
\def\Isom{\textnormal{Isom}}
\def\GL{\textnormal{GL}}
\def\rk{\textnormal{rk}}
\def\SL{\textnormal{SL}}
\def\SU{\textnormal{SU}}
\def\top{\textnormal{top}}

\def\Spin{\textnormal{Spin}}
\def\SO{\textnormal{SO}}

\def\prt{\partial}
\def\dbar{\bar\partial}
\def\eset{\emptyset}
\def\w{\wedge}
\def\rdet{\wh{\tn{det}}}

\begin{document}

\title{On the Topology of Real Bundle Pairs\\ 
over Nodal Symmetric Surfaces}
\author{Penka Georgieva\thanks{Supported by ERC grant STEIN-259118} $~$and 
Aleksey Zinger\thanks{Partially supported by NSF grant DMS 1500875 and MPIM}}
\date{\today}
\maketitle

\begin{abstract}
\noindent
We give an alternative argument for the classification of real bundle pairs
over smooth symmetric surfaces and extend this classification to nodal symmetric surfaces.
We also classify the homotopy classes of automorphisms of  real bundle pairs
over symmetric surfaces.
The two statements together describe the isomorphisms between  real bundle pairs
over symmetric surfaces up to deformation.
\end{abstract}

\section{Introduction}
\label{intro_sec}

\noindent
The study of symmetric surfaces goes back to at least~\cite{Klein}.
They have since played important roles in different areas of mathematics,
as indicated by~\cite{AG} and its citations.
Real bundle pairs, or \sf{Real vector bundles} in the sense of~\cite{Atiyah}, 
over smooth symmetric surfaces are classified in~\cite{BHH}.
In this paper, we give an alternative proof of this core result of~\cite{BHH},
obtain its analogue for nodal symmetric surfaces, and classify
the automorphisms of real bundle pairs over symmetric surfaces.
Special cases of the main results of this paper, 
Theorems~\ref{nodalBHH_thm} and~\ref{auto_thm} below, 
are one of the ingredients in the construction of positive-genus real Gromov-Witten invariants
in~\cite{RealGWsI} and in the study of their properties in~\cite{RealGWsII}.\\

\noindent
An \textsf{involution} on a topological space~$X$ is a homeomorphism
$\phi\!:X\!\lra\!X$ such that $\phi\!\circ\!\phi\!=\!\id_X$.
A \sf{symmetric surface} $(\Si,\si)$ is a closed  oriented (possibly nodal) surface~$\Si$  
with an orientation-reversing involution~$\si$.
If $\Si$ is smooth, the fixed locus~$\Si^{\si}$ of~$\si$ is a disjoint union of circles.
In general, $\Si^{\si}$ consists of isolated points (called $E$~nodes in 
\cite[Section~3.2]{Melissa})
and circles identified at pairs of points (called $H$~nodes in~\cite[Section~3.2]{Melissa}).\\

\noindent
Let $(X,\phi)$ be a topological space with an involution.
A \sf{conjugation} on a complex vector bundle $V\!\lra\!X$ 
\sf{lifting}~$\phi$ is a vector bundle homomorphism 
$\vph\!:V\!\lra\!V$ covering~$\phi$ (or equivalently 
a vector bundle homomorphism  $\vph\!:V\!\lra\!\phi^*V$ covering~$\id_X$)
such that the restriction of~$\vph$ to each fiber is anti-complex linear
and $\vph\!\circ\!\vph\!=\!\id_V$.
A \sf{real bundle pair} $(V,\vph)\!\lra\!(X,\phi)$   
consists of a complex vector bundle $V\!\lra\!X$ and 
a conjugation~$\vph$ on $V$ lifting~$\phi$.
For example, 
$$(X\!\times\!\C^n,\phi\!\times\!\fc)\lra(X,\phi),$$
where $\fc\!:\C^n\!\lra\!\C^n$ is the standard conjugation on~$\C^n$,
is a real bundle pair;
we call it the \sf{trivial rank~$n$ real bundle pair over~$(X,\phi)$}.
For any real bundle pair $(V,\vph)$ over~$(X,\phi)$, 
the fixed locus 
$$V^{\vph}\equiv \big\{v\!\in\!V\!:\,\vph(v)\!=\!v\big\}$$
of~$\vph$
is a real vector bundle over the fixed locus $X^{\phi}$ of~$\phi$
with $\rk_{\R}V^{\vph}\!=\!\rk_{\C}V$.\\

\noindent
If $(V_1,\vph_1)$ and $(V_2,\vph_2)$ are real vector bundle pairs over~$(X,\phi)$,
an \sf{isomorphism}
\BE{nodalBHH_e}\Phi\!:(V_1,\vph_1)\lra (V_2,\vph_2)\EE
\sf{of real bundle pairs over~$(X,\phi)$} is a $\C$-linear isomorphism $\Phi\!:V_1\!\lra\!V_2$
covering the identity~$\id_X$ such that $\Phi\!\circ\!\vph_1\!=\!\vph_2\!\circ\!\Phi$.
We call two real bundle pairs  $(V_1,\vph_1)$ and $(V_2,\vph_2)$ over~$(X,\phi)$ \sf{isomorphic} if 
there exists an isomorphism of real bundle pairs as in~\eref{nodalBHH_e}.
Our first theorem classifies real bundle pairs over symmetric surfaces up to isomorphism. 

\begin{thm}\label{nodalBHH_thm}
Suppose $(\Si,\si)$ is a (possibly nodal) symmetric surface.
Two real bundle pairs $(V_1,\vph_1)$ and $(V_2,\vph_2)$ over $(\Si,\si)$ are
isomorphic if and only~if 
$$\rk_{\C}V_1=\rk_{\C}V_2, \qquad
w_1\big(V_1^{\vph_1}\big)=w_1\big(V_2^{\vph_2}\big)\in H^1(\Si^{\si};\Z_2),$$
and $\deg(V_1|_{\Si'})\!=\!\deg(V_2|_{\Si'})$ for each irreducible component $\Si'\!\subset\!\Si$.
\end{thm}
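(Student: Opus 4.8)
\emph{Plan and necessity.} The plan is to check the forward implication directly and the reverse one by reducing to real line bundle pairs, treating nodes throughout as $0$-cells. For necessity, an isomorphism $\Phi\colon(V_1,\vph_1)\to(V_2,\vph_2)$ restricts over $\Si^\si$ to an isomorphism $V_1^{\vph_1}\to V_2^{\vph_2}$ of real vector bundles and over each irreducible component $\Si'$ to a $\C$-linear isomorphism $V_1|_{\Si'}\to V_2|_{\Si'}$, so the ranks, the classes $w_1(V_i^{\vph_i})$, and the degrees $\deg(V_i|_{\Si'})$ all agree.

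\emph{Reduction to line bundles.} Fix a $\vph$-invariant Hermitian metric on $(V,\vph)$ (average an arbitrary one over $\{\id,\vph\}$) and call a section $s$ of $V$ \emph{real} if $s\circ\si=\vph\circ s$. Such an $s$ restricts over $\Si^\si$ to a section of $V^{\vph}$ and is determined on $\Si\setminus\Si^\si$ by its values on a fundamental domain; hence the bundle over $\Si/\si$ whose sections are the $(n-1)$-tuples of pointwise linearly independent real sections of $V$ has the complex Stiefel manifold of $(n-1)$-frames in $\C^n$ --- which is $2$-connected --- as its fiber over $(\Si\setminus\Si^\si)/\si$, and the real Stiefel manifold of $(n-1)$-frames in $\R^n$, i.e.\ a copy of $\SO(n)$ --- which is connected --- as its fiber over $\Si^\si$. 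Since $\Si^\si$ is a graph and $(\Si\setminus\Si^\si)/\si$ is two-dimensional, obstruction theory (build the frame first over $\Si^\si$, then over the rest) produces a real $(n-1)$-frame $(s_1,\dots,s_{n-1})$ when $n\ge2$. Its complex span is a $\vph$-invariant subbundle isomorphic to $(\C^{n-1},\fc)$, and its metric-orthogonal complement gives $(V,\vph)\cong(\C^{n-1},\fc)\oplus(L,\wt\vph)$ for a real line bundle pair $(L,\wt\vph)$. The trivial summand contributes nothing to $w_1$ or to any degree, so $(L,\wt\vph)$ has the same invariants as $(V,\vph)$; the theorem then follows by applying the rank-$1$ case to the line bundle pairs split off from $(V_1,\vph_1)$ and $(V_2,\vph_2)$.

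\emph{The rank-$1$ case.} It remains to prove that a real line bundle pair $(L,\wt\vph)$ with $w_1(L^{\wt\vph})=0$ and $\deg(L|_{\Si'})=0$ for every $\Si'$ admits a nowhere-zero real section, hence is isomorphic to $(\Si\!\times\!\C,\si\!\times\!\fc)$. Vanishing of the degrees gives $c_1(L)=0\in H^2(\Si;\Z)$, so $L$ is trivial as a complex line bundle; in such a trivialization $\wt\vph$ has the form $(x,v)\mapsto(\si(x),g(x)\ov v)$ with $g\colon\Si\to\C^*$ satisfying $g(\si(x))\ov{g(x)}=1$, and a real section becomes a map $u\colon\Si\to\C$ with $u(\si(x))=g(x)\ov{u(x)}$. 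The polar form of $g$ shows that this condition forces $g/|g|$ to be $\si$-invariant and $|g|(\si(x))=1/|g|(x)$; gauging the trivialization by $\sqrt{|g|}$ replaces $g$ by $g/|g|$, so one may assume $|g|\equiv1$ and $g$ descends to a map $Y\!:=\!\Si/\si\to S^1$. The class of $g|_{\Si^\si}$ in $H^1(\Si^\si;\Z)$ reduces mod $2$ to $w_1(L^{\wt\vph})=0$, so $g|_{\Si^\si}$ has a continuous square root and $u$ can be chosen nowhere zero over a neighborhood of $\Si^\si$; extending it over the rest of $\Si$, the only obstruction is a class in a relative second cohomology group of $Y$ whose image in $H^2(\Si;\Z)$ equals $c_1(L)=0$ (the residual indeterminacy having been absorbed by the choice already made over $\Si^\si$), and so it vanishes. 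Nodes cause no trouble here: a real bundle pair is trivial on a neighborhood of any node and on its $\si$-quotient, so the obstruction-theoretic steps are unaffected; alternatively one pulls everything back to the normalization $(\wt\Si,\wt\si)$, applies the smooth classification componentwise, and checks that the remaining gluing data at the nodes is pinned down by the degrees.

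\emph{Main obstacle.} The delicate point is the rank-$1$ step, specifically the precise identification of the second-cohomology obstruction with the tuple of degrees. This requires tracking the exact sequences relating the cohomologies of $Y=\Si/\si$, of $\Si$, and of $\Si^\si$ --- through the transfer of the double cover $\Si\setminus\Si^\si\to Y\setminus\Si^\si$ --- uniformly across both node types and, most delicately, across irreducible components that do not meet $\Si^\si$ (where the relevant part of $Y$ is closed, so its $H^2$ is genuinely nonzero) as well as components interchanged by $\si$. Setting up the equivariant obstruction theory on a nodal surface so that it handles all of these configurations at once is where the bulk of the work lies.
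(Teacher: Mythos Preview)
Your strategy---split off a trivial rank~$(n{-}1)$ real summand by an equivariant frame argument, then handle the rank-$1$ case directly---is the classical route and is genuinely different from the paper's. The paper never splits off a trivial summand. For smooth $(\Si,\si)$ it chooses an oriented sh-surface half $\Si^b$, pinches $\Si$ along circles parallel to $\partial\Si^b$ to produce a nodal $(\Si',\si')$ made of two closed surfaces $\Si'_{\pm}$ interchanged by~$\si'$ and a chain of $\si'$-invariant spheres, shows every real bundle pair descends to $\Si'$ with standard restriction to the spheres, and then reduces to matching ordinary complex bundles on $\Si'_+$ by rank and degree. For nodal $(\Si,\si)$ it inducts on the number of nodes: desingularize one node (or conjugate pair), apply the inductive hypothesis on the partial normalization, then correct the resulting isomorphism by an automorphism so that it respects the gluing at the reinstated node (this uses the automorphism-extension machinery of Section~3 and a small linear-algebra lemma for $E$~nodes). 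Your approach is more uniform across smooth and nodal cases and avoids the sh-surface combinatorics; the paper's avoids equivariant obstruction theory entirely and never needs to identify any cohomological obstruction class.

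However, your proposal is not yet a proof, for the reason you yourself flag. In the rank-$1$ step you assert that the obstruction to extending the nowhere-zero real section from a neighborhood of $\Si^\si$ to all of $\Si$ lives in some relative $H^2$ of $Y=\Si/\si$ ``whose image in $H^2(\Si;\Z)$ equals $c_1(L)=0$ \ldots\ and so it vanishes.'' You neither identify this group nor show that the comparison map to $H^2(\Si;\Z)$ is injective, and the parenthetical about ``residual indeterminacy having been absorbed'' is doing unstated work. This matters: when $\langle\si\rangle=1$ the quotient $Y$ is non-orientable and the transfer you invoke is not straightforward; when a component of $\Si$ misses $\Si^\si$, or is swapped with another by~$\si$, the relevant piece of relative $H^2$ need not inject into $H^2(\Si;\Z)$. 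Your ``Main obstacle'' paragraph concedes exactly this, but conceding a gap is not closing it. The alternative you offer---pull back to the normalization and declare that ``the remaining gluing data at the nodes is pinned down by the degrees''---is also incomplete: gluing at an $H$~node is constrained by $w_1$ rather than the degrees, gluing at an $E$~node by a $\C$-antilinear compatibility (the paper's Lemma~4.1), and matching it requires constructing automorphisms over the normalization with prescribed values at node preimages, which is precisely the nontrivial content of the paper's Sections~3--4. Finally, a small point: your rank-$1$ paragraph only treats \emph{trivial} invariants; you should say that the general rank-$1$ statement follows by passing to $(L_1\otimes L_2^{\,*},\wt\vph_1\otimes\wt\vph_2^{\,*})$.
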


\noindent
Let $X$ be a topological space. 
We denote by~$\cC(X;\R^*)$ and~$\cC(X;\C^*)$  the topological groups of \hbox{$\R^*$-valued} 
and  $\C^*$-valued, respectively,  continuous functions on~$X$.
For a real vector bundle~$V$ over~$X$,
let $\GL(V)$ be the topological group of vector bundle isomorphisms
of~$V$ with itself covering~$\id_X$ and
$\SL(V)\!\subset\!\GL(V)$ be the subgroup of isomorphisms~$\psi$ so that the induced isomorphism
$$\La_{\R}^{\top}\psi\!:\La^{\top}_{\R}V\lra \La^{\top}_{\R}V$$
is the identity.
If $V$ is a line bundle, then $\GL(V)$ is naturally identified with~$\cC(X;\R^*)$ and
$\SL(V)\!\subset\!\GL(V)$ is the one-point set consisting of the constant function~1.
For an arbitrary real vector bundle~$V$ over~$X$ and $\psi\!\in\!\GL(V)$,
we denote by ${\det}_{\R}\psi$ the continuous function on~$X$ corresponding to
the isomorphism $\La_{\R}^{\top}\psi$ of~$\La_{\R}^{\top}V$.\\

\noindent
Let $(X,\phi)$ be a topological space with an involution.
Denote~by 
$$\cC\big(X,\phi;\C^*\big)\subset \cC\big(X;\C^*\big)$$ 
the subgroup of continuous maps~$f$
such that \hbox{$f(\phi(z))\!=\!\ov{f(z)}$} for all $z\!\in\!X$.
The restriction of such a function to the fixed locus $X^{\phi}\!\subset\!\Si$ 
takes values in~$\R^*$, i.e.~gives rise to a homomorphism
$$\cC\big(X,\phi;\C^*\big)\lra \cC\big(X;\R^*\big), \qquad f\lra f\big|_{X^{\phi}}\,.$$
For a real bundle pair~$(V,\vph)$ over~$(X,\phi)$,
let $\GL(V,\vph)$ be the topological group of real bundle isomorphisms 
of~$(V,\vph)$ with itself over~$(X,\phi)$ and 
$\SL(V,\vph)\!\subset\!\GL(V,\vph)$
be the subgroup of isomorphisms~$\Psi$ so that the induced isomorphism
$$\La_{\C}^{\top}\Psi\!:\La^{\top}_{\C}(V,\vph)\lra \La^{\top}_{\C}(V,\vph)$$
is the identity.
If $(V,\vph)$ is a rank~1 real bundle pair, $\GL(V,\vph)$ is naturally identified 
with~$\cC(X,\phi;\C^*)$ and
$\SL(V,\vph)$ is the one-point set consisting of the constant function~1.
For an arbitrary real vector bundle pair $(V,\vph)$ and $\Psi\!\in\!\GL(V,\vph)$,
we denote by ${\det}_{\C}\Psi$ the element of $\cC(X,\phi;\C^*)$  corresponding to
the isomorphism  $\La_{\C}^{\top}\Psi$ of~$\La^{\top}_{\C}(V,\vph)$.
Let
$$\GL'(V,\vph)=\big\{(f,\psi)\!\in\!\cC(X,\phi;\C^*)\!\times\!\GL(V^{\vph})\!:\,
f|_{X^{\phi}}\!=\!{\det}_{\R}\psi\big\}.$$
Our second theorem describes the topological components of $\GL(V,\vph)$
and $\SL(V,\vph)$ for real bundle pairs over symmetric surfaces.

\begin{thm}\label{auto_thm}
Let $(\Si,\si)$ be a (possibly nodal) symmetric surface and $(V,\vph)$ be a real bundle pair
over~$(\Si,\si)$.
Then the homomorphisms
\BE{AutoThm_e}\begin{split}
\GL(V,\vph)\lra \GL'(V,\vph)&\lra \cC(\Si,\si;\C^*),\GL(V^{\vph}),\\
\Psi\lra\big(\det\Psi,\Psi|_{V^{\vph}}\big), &\qquad
(f,\psi)\lra f,\psi,
\end{split}\EE
are surjective.
Two automorphisms of $(V,\vph)$ lie in the same path component
of~$\GL(V,\vph)$ if and only if their images in $\cC(\Si,\si;\C^*)$ and in $\GL(V^{\vph})$
lie in the same path components of the two spaces.
Furthermore, every path $(f_t,\psi_t)$ in $\GL'(V,\vph)$ passing through 
the images of some \hbox{$\Psi,\Phi\!\in\!\GL(V,\vph)$} lifts to a path 
in $\GL(V,\vph)$ passing through~$\Psi$ and~$\Phi$. 
The analogous statements hold for the homomorphism
\BE{SLrest_e}\SL(V,\vph)\lra \SL(V^{\vph}), \qquad \Psi\lra\Psi|_{V^{\vph}}, \EE
in place of~\eref{AutoThm_e}.
\end{thm}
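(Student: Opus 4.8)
The plan is to reduce the whole statement to two facts: the group $\SL_n\C$ is $2$-connected, i.e.\ $\pi_i(\SL_n\C)\!=\!0$ for $i\!\le\!2$; and the mapping space $\cC(\Si^{\si};\R^*)$ is homotopy discrete (being the space of maps from a compact space into $\R^*\!\simeq\!\{\pm1\}$), so that $\pi_1(\cC(\Si^{\si};\R^*),\ast)\!=\!0$ in every basepoint, while the restriction $\cC(\Si,\si;\C^*)\!\to\!\cC(\Si^{\si};\R^*)$ is a surjective Serre fibration. Here $n\!=\!\rk_\C V$, which we may assume is at least $1$. The underlying geometric point is that $\si$ acts freely on $\Si\!\setminus\!\Si^{\si}$, that a $\si$-invariant neighborhood of $\Si^{\si}$ (containing all $E$ and $H$ nodes) deformation retracts $\si$-equivariantly onto $\Si^{\si}$, and that over $\Si^{\si}$ one has $V|_{\Si^{\si}}\!=\!V^{\vph}\!\otimes_\R\!\C$ with $\vph$ acting as complex conjugation; consequently an automorphism $\Psi$ of $(V,\vph)$ is forced over $\Si^{\si}$ to equal $\psi\!\otimes_\R\!\id_\C$ with $\psi\!=\!\Psi|_{V^{\vph}}$, and then $\det_\C\Psi|_{\Si^{\si}}\!=\!\det_\R\psi$, so that $\Psi\!\mapsto\!(\det_\C\Psi,\Psi|_{V^{\vph}})$ indeed lands in $\GL'(V,\vph)$.

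\emph{Surjectivity.} For \eref{SLrest_e}, given $\psi\!\in\!\SL(V^{\vph})$ one must extend the forced value $\psi\!\otimes_\R\!\id_\C$ over $\Si^{\si}$ to a global element of $\SL(V,\vph)$; since over a point of $\Si\!\setminus\!\Si^{\si}$ the automorphisms of determinant $1$ form an $\SL_n\C$-torsor, this is an equivariant extension problem on the pair $(\Si,\Si^{\si})$ whose obstructions are (Bredon) cohomology classes with coefficients in $\pi_\bullet(\SL_n\C)$, all of which vanish as $\dim\Si\!=\!2$. For the first homomorphism in \eref{AutoThm_e}, given $(f,\psi)\!\in\!\GL'(V,\vph)$ one picks a rank-$1$ real sub-bundle pair $(L,\vph_L)\!\subset\!(V,\vph)$ with complement $(V',\vph')$ (for $n\!=\!1$ take $(L,\vph_L)\!=\!(V,\vph)$, and for $n\!\ge\!2$ such a pair exists by Theorem~\ref{nodalBHH_thm} or a direct obstruction argument), sets $\Psi_1\!=\!(f\!\cdot\!\id_L)\!\oplus\!\id_{V'}\!\in\!\GL(V,\vph)$ so that $\det_\C\Psi_1\!=\!f$, and notes that $\psi\!\circ\!(\Psi_1|_{V^{\vph}})^{-1}$ has real determinant $(\det_\R\psi)/(f|_{\Si^{\si}})\!=\!1$; lifting it via the surjectivity of \eref{SLrest_e} to $\Psi_2\!\in\!\SL(V,\vph)$, the automorphism $\Psi\!=\!\Psi_2\Psi_1$ maps to $(f,\psi)$. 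The surjectivity of $\GL'(V,\vph)\!\to\!\cC(\Si,\si;\C^*)$ is immediate (rescale a trivial line summand of $V^{\vph}$), and the surjectivity of $\GL'(V,\vph)\!\to\!\GL(V^{\vph})$ is exactly the surjectivity of $\cC(\Si,\si;\C^*)\!\to\!\cC(\Si^{\si};\R^*)$, proved by extending an $\R^*$-valued function on $\Si^{\si}$ to a $\si$-equivariant $\C$-valued function, perturbing to isolated zeros lying off $\Si^{\si}$ in $\si$-conjugate pairs, and cancelling them in pairs (their total local degree vanishes on each component of $\Si\!\setminus\!\Si^{\si}$ since the function is $\R^*$-valued near $\Si^{\si}$).

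\emph{Path components and path lifting.} Write $q\colon\GL(V,\vph)\!\to\!\GL'(V,\vph)$ and $q'\colon\SL(V,\vph)\!\to\!\SL(V^{\vph})$ for the maps in question. Both are Serre fibrations, because the relevant lifting problems are extension problems over the pair $(\Si\!\times\!I,\Si\!\times\!\{0\}\!\cup\!\Si^{\si}\!\times\!I)$ and the sub-complex here is a deformation retract ($\Si^{\si}\!\hookrightarrow\!\Si$ being a cofibration). Every nonempty fiber of $q$ or of $q'$ is a torsor over $G_0\!=\!\{\Theta\!\in\!\GL(V,\vph):\det_\C\Theta\!=\!1,\ \Theta|_{V^{\vph}}\!=\!\id\}$: if $q(\Psi)\!=\!q(\Psi_0)$ then $\Psi\Psi_0^{-1}\!\in\!G_0$, and likewise for $q'$. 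An element of $G_0$ is the identity over $\Si^{\si}$, so $G_0$ is the space of equivariant sections over $\Si$ of the bundle of $\SL$-automorphisms of $V$ that are the identity over $\Si^{\si}$; by the $2$-connectivity input and $\dim\Si\!=\!2$ all obstructions to joining two such sections through such sections vanish, so $G_0$ — hence every nonempty fiber of $q$ and of $q'$ — is path-connected. It follows that $\pi_0(\GL(V,\vph))\!\to\!\pi_0(\GL'(V,\vph))$ and $\pi_0(\SL(V,\vph))\!\to\!\pi_0(\SL(V^{\vph}))$ are bijections, and that lifting a path in $\GL'(V,\vph)$ starting at $\Psi$ and then joining its endpoint to $\Phi$ within $q^{-1}(q(\Phi))$ yields the asserted path lifting through $\Psi$ and $\Phi$ (and its $\SL$-analogue). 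Finally, to see that $\pi_0(\GL'(V,\vph))$ injects into $\pi_0(\cC(\Si,\si;\C^*))\!\times\!\pi_0(\GL(V^{\vph}))$: given two points of $\GL'(V,\vph)$ and paths $f_t$, $\psi_t$ joining their coordinates, the loop $t\!\mapsto\!(f_t|_{\Si^{\si}})(\det_\R\psi_t)^{-1}$ based at the constant function $1$ in $\cC(\Si^{\si};\R^*)$ is null-homotopic, hence lifts — through the Serre fibration $\cC(\Si,\si;\C^*)\!\to\!\cC(\Si^{\si};\R^*)$ — to a based loop $h_t$ in $\cC(\Si,\si;\C^*)$ with $h_t|_{\Si^{\si}}\!=\!(f_t|_{\Si^{\si}})(\det_\R\psi_t)^{-1}$, and then $(f_th_t^{-1},\psi_t)$ is a path in $\GL'(V,\vph)$ joining the two given points.

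The main obstacle I anticipate is not a single hard step but the careful equivariant bookkeeping in the nodal setting: organising the obstruction theory on $(\Si,\Si^{\si})$ so that the $E$ and $H$ nodes (which lie in the retracting neighborhood of $\Si^{\si}$) and the $\si$-conjugate pairs of nodes (which lie off $\Si^{\si}$ and are freely permuted) are genuinely harmless, and checking that all the restriction and determinant maps of section spaces are Serre fibrations. Conceptually, though, the argument is entirely powered by the $2$-connectivity of $\SL_n\C$ and the homotopy discreteness of $\cC(\Si^{\si};\R^*)$; once the framework is in place, each verification is routine, and one may shorten the computations further by first using Theorem~\ref{nodalBHH_thm} to replace $(V,\vph)$ by a convenient model.
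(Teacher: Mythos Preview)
Your approach is correct and genuinely different from the paper's. The paper establishes the theorem through a sequence of explicit constructions: Proposition~3.1 (surjectivity and path-lifting for $\SL(V,\vph)\to\SL(V^{\vph})$) is proved by building extensions by hand on tubular neighborhoods of $\Si^{\si}$ using $\pi_1(\SL_n\C)=0$; Proposition~5.1 (path-connectedness of what you call $G_0$) is proved by passing to a half-surface $\wt\Si^b$, cutting it along arcs into a disk, and invoking $\pi_1(\SL_n\C)=\pi_2(\SL_n\C)=0$ stage by stage; Lemma~6.1 handles the determinant via an explicit line-summand splitting (as you also do). Nodal points are dealt with by explicit normalization, with separate arguments for $H$, $E$, and $C$ nodes. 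You instead package all of this into the statements that $q$ and $q'$ are Serre fibrations with path-connected fibers, obtained from the equivariant cofibration $\Si^{\si}\hookrightarrow\Si$ and the $2$-connectivity of $\SL_n\C$ via equivariant obstruction theory. Your treatment of the injectivity of $\pi_0(\GL'(V,\vph))$ into the product, via lifting a null-homotopic loop in $\cC(\Si^{\si};\R^*)$ through the fibration $\cC(\Si,\si;\C^*)\to\cC(\Si^{\si};\R^*)$, has no counterpart in the paper, which never needs to analyze $\GL'(V,\vph)$ intrinsically.

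What each approach buys: the paper's route is entirely self-contained and elementary, requiring no familiarity with equivariant homotopy theory or Bredon cohomology, and the explicit normalization makes the nodal bookkeeping completely transparent. Your route is shorter and more conceptual, and it makes plain that the entire theorem is a formal consequence of $\pi_i(\SL_n\C)=0$ for $i\le 2$ together with $\dim\Si\le 2$; it also immediately suggests how the statement would change for higher-dimensional bases or other structure groups. The cost is exactly the one you flag: verifying that the equivariant cofibration and obstruction-theory machinery applies cleanly to the nodal CW structure, and that the fibration claims for $q$ (which carries a determinant constraint over all of $\Si$, not just over $\Si^{\si}$) really do reduce to section-extension problems for an $\SL_n\C$-torsor bundle. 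These checks are routine but must be written out, whereas the paper's explicit constructions sidestep them entirely.
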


\noindent
For a smooth symmetric surface $(\Si,\si)$, Theorem~\ref{nodalBHH_thm} reduces to
\cite[Propositions~4.1,4.2]{BHH}.
We give a completely different proof of this result in Section~\ref{BHH_sec};
see Proposition~\ref{BHH_prp} and its proof.
The portion of Theorem~\ref{auto_thm} concerning the surjectivity
of the homomorphism~\eref{SLrest_e} and its analogue for $\GL(V,\vph)$
is established in Section~\ref{Isom_sec}; see Proposition~\ref{IsomExtend_prp}.
We use this proposition
to complete the proof of Theorem~\ref{nodalBHH_thm} by induction from the base case 
of Proposition~\ref{BHH_prp} in Section~\ref{IsomExist_sec}.
Proposition~\ref{homotopextend_prp} is the crucial step needed for the lifting of homotopies in
Theorem~\ref{auto_thm}; it is obtained in Section~\ref{Homotopies_sec}.
This theorem is then proved in Section~\ref{MainPfs_sec}.
Section~\ref{realGW_sec} describes connections of Theorems~\ref{nodalBHH_thm} and~\ref{auto_thm}
with recent advances in real Gromov-Witten theory made in \cite{RealOrient,RealGWsI,RealGWsII}.\\

\noindent
The second author would like to thank Max-Planck-Institut f\"ur Mathematik
for the hospitality during the preparation of this paper.

\section{The smooth case of Theorem~\ref{nodalBHH_thm}}
\label{BHH_sec}

\noindent
We begin by establishing the smooth case of Theorem~\ref{nodalBHH_thm}.

\begin{prp}[{\cite[Propositions~4.1,4.2]{BHH}}]\label{BHH_prp}
Theorem~\ref{nodalBHH_thm} holds if $(\Si,\si)$ is a smooth symmetric surface.
\end{prp}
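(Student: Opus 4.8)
The plan is to reduce the statement to the classification of complex vector bundles over a surface together with the classification of the real locus $V^{\vph}$ over the circles $\Si^{\si}$, handled separately and then glued. The necessity of the three conditions is immediate: an isomorphism of real bundle pairs is in particular a $\C$-linear isomorphism covering $\id_\Si$, so it preserves $\rk_\C$ and the degree on each component, and its restriction to $V^{\vph}$ is a real bundle isomorphism over $\Si^{\si}$, hence preserves $w_1$. So the work is entirely in the sufficiency direction.

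For sufficiency, I would first dispose of the line bundle case, $\rk_\C V_i=1$. Here one uses that a real bundle pair of rank $1$ over $(\Si,\si)$ is determined by its topological type, which over a smooth $\Si$ is pinned down by $\deg V$ together with $w_1(V^{\vph})\in H^1(\Si^{\si};\Z_2)$; these are subject only to the parity constraint relating $\deg V$ to the $w_1$ data (the sum of the components of $w_1$ being the reduction of $\deg V$ modulo $2$, since $V^{\vph}$ extends across the doubling). One way to see the classification is via obstruction theory on the orbit $\Si/\si$: rank-one real bundle pairs are classified by a twisted cohomology group, and one checks by hand that the invariants listed in the theorem form a complete set. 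Alternatively one constructs explicit models: over each component of $\Si^{\si}$ one has the two real line bundles on $S^1$, and one shows any prescribed $(\deg, w_1)$ datum is realized by an explicit real bundle pair and that two with the same datum are isomorphic by exhibiting the isomorphism first over a tubular neighborhood of $\Si^{\si}$ (matching the $w_1$) and then extending over the complement, which retracts onto a $1$-complex so there is no obstruction to extending a $\C^*$-valued transition function.

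For higher rank I would argue by induction on $\rk_\C V$, splitting off a line subbundle invariant under the conjugation. The key point is that any real bundle pair $(V,\vph)$ over $(\Si,\si)$ of rank $n\ge 2$ admits a $\vph$-invariant complex line subbundle $(L,\vph|_L)$, equivalently a nowhere-vanishing section of $\P(V)$ compatible with the induced involution; this follows from dimension counting, since the fibers of $\P(V)$ have real dimension $2(n-1)\ge 2$ while $\Si$ and its quotient are $2$-dimensional, so the relevant obstructions to a section vanish (one builds the section over a fundamental domain and over $\Si^{\si}$ compatibly with $\vph$). Having chosen $L$, one must also arrange that $\deg L$ and $w_1(L^{\vph|_L})$ match for $(V_1,\vph_1)$ and $(V_2,\vph_2)$; this uses the line bundle classification in reverse, realizing any admissible datum for $L$ and, crucially, noting that the quotient pair $(V/L, \bar\vph)$ then has matching invariants as well (its degree is $\deg V-\deg L$ on each component and its $w_1$ is $w_1(V^{\vph})-w_1(L^{\vph|_L})$). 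One then splits $(V,\vph)\cong(L,\vph|_L)\oplus(V/L,\bar\vph)$ — the extension is classified by a group that vanishes for dimensional reasons over a $2$-complex, again because the relevant coefficient bundle has positive-dimensional fibers — and applies the inductive hypothesis to $(V/L,\bar\vph)$ together with the rank-one case for $(L,\vph|_L)$.

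The main obstacle, and the place where the smoothness of $\Si$ really enters, is the construction of the $\vph$-invariant line subbundle and the verification that the extension splits: both are obstruction-theoretic statements over the quotient orbifold $\Si/\si$, whose underlying space is a surface with boundary, and one must check that sections can be built $\vph$-equivariantly — i.e., matching prescribed behavior along $\Si^{\si}$ where the structure group gets cut down to the real form. Concretely the delicate step is extending an equivariant section (or splitting) from a neighborhood of $\Si^{\si}$ across the rest of $\Si$: the complement deformation retracts onto a wedge of circles and arcs, and on a $1$-complex every $\GL_n(\C)$-bundle is trivial and every section extends, but one has to be careful that the $\Z_2$-action is free off $\Si^{\si}$ so the quotient genuinely is a manifold with boundary and no orbifold subtleties intervene. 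I expect to set this up once, carefully, as a lemma on extending equivariant sections of bundles with contractible (or high-connectivity) fibers, and then invoke it three times: for line subbundles, for splittings of extensions, and for the line bundle classification itself.
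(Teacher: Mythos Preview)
Your approach --- induction on rank via splitting off a $\vph$-invariant line sub-pair --- is genuinely different from the paper's. The paper instead pinches $\Si$ along a separating family of $\si$-invariant circles (the boundary of an oriented sh-surface doubling to $(\Si,\si)$) to obtain a nodal surface $\Si'=\Si'_+\cup\{S^2_i\}\cup\Si'_-$ with $\si'$ swapping $\Si'_\pm$ and preserving each central sphere~$S^2_i$. A short lemma shows every real bundle pair on $(\Si,\si)$ is pulled back from one on $(\Si',\si')$ whose restriction to each $S^2_i$ is one of two explicit ``standard'' models, distinguished exactly by $w_1(V^\vph)$ on that circle. The classification then reduces to matching complex bundles on~$\Si'_+$ with prescribed fibers at the nodes --- a purely complex problem solved by rank and degree, since $\GL_n\C$ is connected --- with the $\Si'_-$ side forced by conjugation. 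This avoids any induction on rank and packages all of the $w_1$-data into the central spheres; your route stays on~$\Si$ and trades the degeneration for repeated equivariant obstruction theory.

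There is, however, a real gap in your inductive step. Having found $\vph_i$-invariant line subbundles $L_i\subset V_i$, you assert their invariants can be made to agree by ``the line bundle classification in reverse, realizing any admissible datum for~$L$.'' But the rank-one classification tells you only which \emph{abstract} real line bundle pairs exist over $(\Si,\si)$; it says nothing about which of them embed as sub-pairs of a \emph{given} $(V_i,\vph_i)$. That realizability is a separate claim requiring its own argument. The cleanest fix is to show that for $\rk_\C V\ge 2$ one can always take $L$ to be the \emph{trivial} real line bundle pair: a nowhere-zero section of $V^\vph$ exists over each circle of~$\Si^\si$ since $\rk_\R V^\vph\ge 2$, and the extension to a $\vph$-equivariant nowhere-zero section over the half-surface has obstructions in $\pi_0,\pi_1$ of $\C^n\!\setminus\!0\simeq S^{2n-1}$, which vanish for $n\ge 2$. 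Then both $L_i$ are trivial, hence match, and the quotients inherit the invariants of the~$V_i$. A smaller slip: in your rank-one step, the complement of a tubular neighborhood of~$\Si^\si$ does retract onto a $1$-complex, but you are extending an isomorphism \emph{from the boundary inward}, so the relevant obstruction lives in relative $H^2$ with $\pi_1(\C^*)\cong\Z$ coefficients and is nonzero in general; it vanishes precisely because the degrees agree, not because ``there is no obstruction.''
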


\noindent
Let $(\Si,\si)$ be a smooth genus~$g$ symmetric surface.
We denote by $|\si|_0\!\in\!\Z^{\ge0}$ the number of connected components of~$\Si^{\si}$;
each of them is a circle.
Let $\lr\si\!=\!0$ if the quotient $\Si/\si$ is orientable, 
i.e.~$ \Si\!-\! \Si^{\si}$ is disconnected, and $\lr\si\!=\!1$ otherwise. 
There are $\flr{\frac{3g+4}{2}}$ different topological types of orientation-reversing 
involutions $\si$ on $\Si$ classified by the triples $(g,|\si|_0,\lr\si)$; 
see \cite[Corollary~1.1]{Nat}. \\

\noindent
An \sf{oriented symmetric half-surface} (or simply \sf{oriented sh-surface}) 
is a pair $(\Si^b,c)$ consisting of an oriented bordered smooth surface~$\Si^b$ 
and an involution $c\!:\prt\Si^b\!\lra\!\prt\Si^b$ preserving each component
and the orientation of~$\prt\Si^b$.
The restriction of~$c$  to a boundary component 
is either the identity or the antipodal map
$$\fa\!:S^1\lra S^1, \qquad z\lra -z,$$
for a suitable identification of $(\prt\Si^b)_i$ with $S^1\!\subset\!\C$;
the latter type of boundary structure is called \sf{crosscap} 
in the string theory literature.
We define
$$c_i=c|_{(\prt\Si^b)_i}, \qquad 
|c_i|= \begin{cases} 0,&\hbox{if}~c_i=\id;\\ 1,&\hbox{otherwise};\end{cases}
\qquad
|c|_k=\big|\{(\prt\Si^b)_i\!\subset\!\Si^b\!:\,|c_i|\!=\!k\}\big|\quad k=0,1.$$
Thus, $|c|_0$ is the number of standard boundary components of $(\Si^b,\prt\Si^b)$  
and $|c|_1$ is the number of crosscaps.
Up to isomorphism, each oriented sh-surface $(\Si^b,c)$ is determined by the genus~$g$ of~$\Si^b$,
the number~$|c|_0$ of ordinary boundary components, and the number~$|c|_1$ of crosscaps.\\

\begin{figure}
\begin{center}
\leavevmode
\includegraphics[width=0.6\textwidth,height=150px]{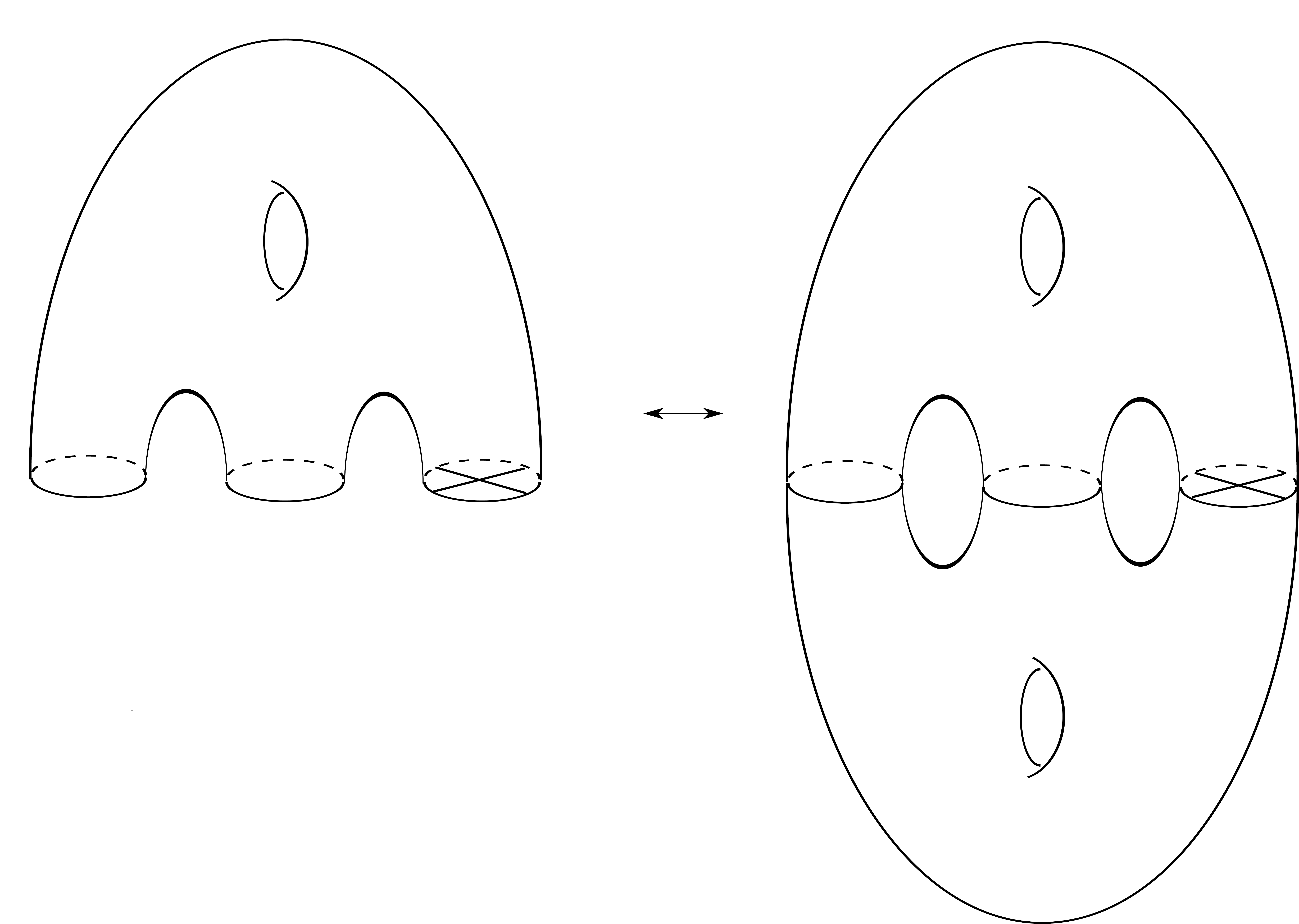}
\end{center}
\caption{Doubling an oriented sh-surface}
\label{HSdoub_fig}
\end{figure}

\noindent
An  oriented sh-surface $(\Si^b,c)$ of type $(g,m_0,m_1)$ \sf{doubles} 
to a symmetric surface~$(\Si, \si)$ of type 
$$(g(\Si), |\si|_0,\lr\si)=\begin{cases} (2g\!+\!m_0\!+\!m_1\!-\!1,m_0,0),& \text{if}~ m_1=0;\\
(2g\!+\!m_0\!+\!m_1\!-\!1,m_0,1),& \text{if}~m_1\neq 0;
\end{cases}$$
so that $\si$ restricts to~$c$ on the cutting circles (the boundary of~$\Si^b$);
see \cite[(1.6)]{XCapsSetup} and Figure~\ref{HSdoub_fig}.
Since this doubling construction covers all topological types of orientation-reversing 
involutions $\si$ on $\Si$, for every symmetric surface $(\Si,\si)$ there is 
an oriented sh-surface $(\Si^b,c)$ which doubles to~$(\Si,\si)$.
In general,  the topological type of such an sh-surface is not unique.\\

\noindent
Let $(\Si,\si)$ be a smooth symmetric surface and $(\Si^b,c)$ be an oriented sh-surface
doubling to~$(\Si,\si)$.
For each~$i$, let 
$$(\prt\Si^b)_i\!\times\!(-2,2)\lra \Si $$
be a parametrization of 
a neighborhood~$U_i$ of~$(\prt\Si^b)_i$ such that $(\prt\Si^b)_i\!\times\!0$ 
corresponds to~$(\prt\Si^b)_i$ and
$$\si(x,\tau)=(x,-\tau) \qquad\forall~(x,\tau)\!\in\!(\prt\Si^b)_i\!\times\!(-2,2).$$ 
We assume that these neighborhoods are disjoint.
Let $(\Si',\si')$ be the nodal symmetric surface obtained from~$(\Si,\si)$
by collapsing the circles $\tau\!=\!\pm1$ in each~$U_i$.
Since $(\prt\Si^b)_i$ is a separating collection, 
the surface $\Si'$ consists of two closed surfaces, $\Si_+'$ and $\Si_-'$, interchanged by~$\si'$
and attached to a collection~$\{S^2_i\}$ of $\si'$-invariant spheres 
with $(\prt\Si^b)_i\!\subset\!S^2_i$; see Figure~\ref{collapse_fig}.
We will call the latter the \sf{central components} of~$\Si'$.
Let
$$q_{\Si}\!:\Si\lra\Si'$$
be the quotient map.
In particular, $q_{\Si}\!\circ\!\si\!=\!\si'\!\circ\!q_{\Si}$.\\

\begin{figure}
\begin{center}
\leavevmode
\includegraphics[width=0.6\textwidth,height=150px]{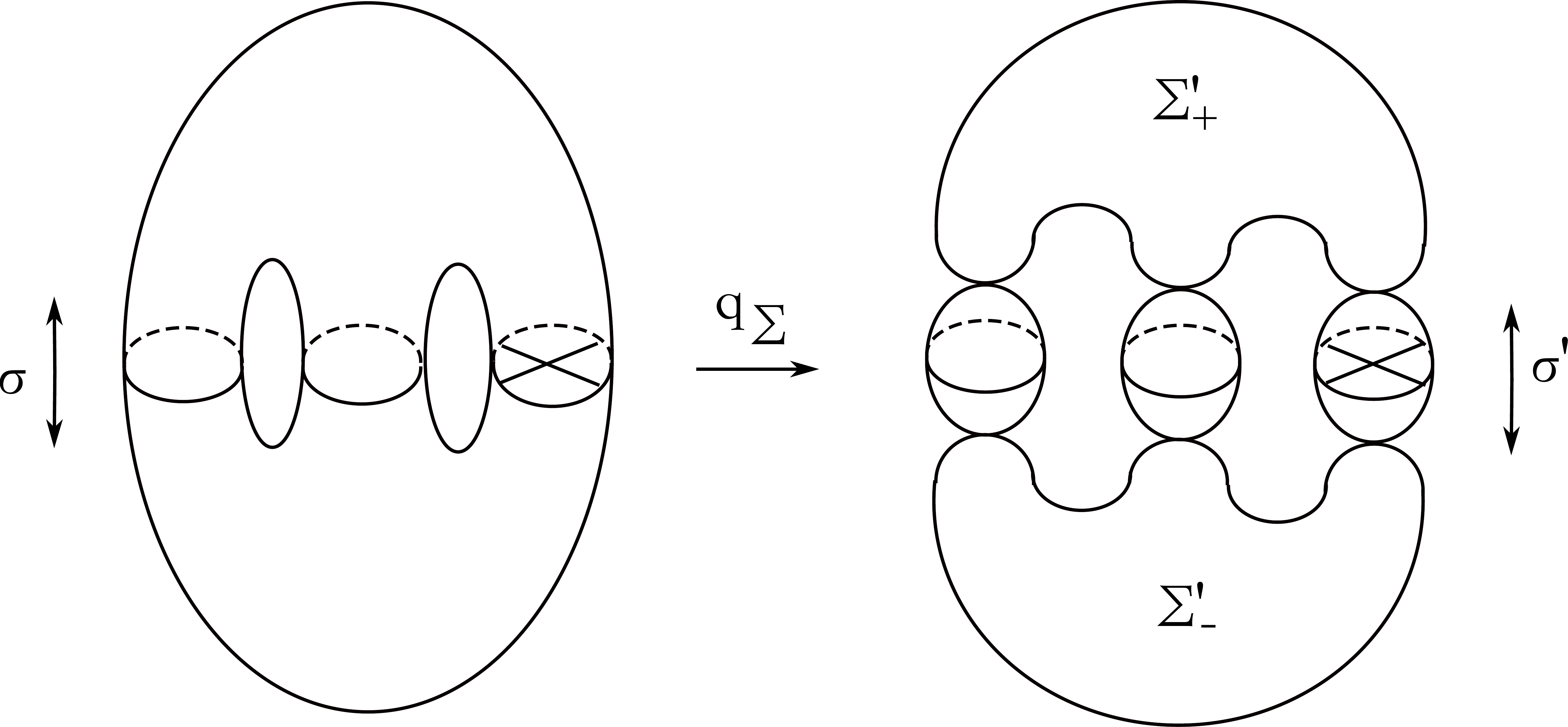}
\end{center}
\caption{A smooth symmetric surface~$(\Si,\si)$ and its associated pinched surface~$(\Si',\si')$}
\label{collapse_fig}
\end{figure}

\noindent
For each cutting circle $(\prt\Si^b)_i$ with $|c_i|\!=\!0$,  let 
$$D_i^+=q_{\Si}\big((\prt\Si^b)_i\!\times\![0,1]\big),\qquad
D_i^-=q_{\Si}\big((\prt\Si^b)_i\!\times\![-1,0]\big).$$
Choose a homeomorphism $f_i\!: (\prt\Si^b)_i\!\lra\!S^1$
and define a rank~1 real bundle pair $(\ga_i,\wt\si_i')$ over~$(S^2_i,\si')$~by
\begin{gather*}
\ga_i\equiv \big(D_i^+\!\times\!\C\sqcup D_i^-\!\times\!\C\big)/\!\!\sim,
\qquad D_i^+\!\times\!\C\ni(x,0,v)\sim\big(x,0,f_i(x)v\big)\in D_i^-\!\times\!\C,\\
\wt\si_i'\big([q_{\Si}(x,\tau),v]\big)=\big[q_{\Si}(x,-\tau),\bar{v}\big].
\end{gather*}
We will call the restriction of a rank~$n$ real bundle pair $(V',\vph')$ 
over~$(\Si',\si')$ to a central component~$S^2_i$ \sf{standard} if
it equals either 
$$\big(S^2_i\!\times\!\C^n,\si'\!\times\!\fc\big) \qquad\hbox{or}\qquad
\big(\ga_i,\wt\si_i'\big)\oplus\big(S^2_i\!\times\!\C^{n-1},\si'\!\times\!\fc\big);$$
the latter is a possibility only if $|c_i|\!=\!0$.

\begin{lmm}\label{bndpull_lmm}
Let $(\Si,\si)$ be a smooth symmetric surface.
For every real bundle pair~$(V,\vph)$ over~$(\Si,\si)$,
there exists a real bundle pair  $(V',\vph')$ over~$(\Si',\si')$
such that $(V,\vph)$ is isomorphic to~$q_{\Si}^*(V',\vph')$ and the restriction of
$(V',\vph')$ to each central component~$(S^2_i,\si_i)$ is 
a standard real bundle pair.
\end{lmm}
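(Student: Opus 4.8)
The plan is to build the bundle pair $(V',\vph')$ on $\Si'$ by descent: since $q_\Si$ collapses the circles $\tau=\pm1$ in each $U_i$ to points, a bundle pair on $\Si'$ is the same as a bundle pair on $\Si$ together with a choice of trivialization (compatible with the conjugation) of its restriction to each collapsed circle. So the main task is to modify $(V,\vph)$ by an isomorphism so that it becomes trivial, in the appropriate equivariant sense, near $\tau=\pm1$, and so that on the part of $\Si$ mapping to each central sphere $S^2_i$ it has the prescribed standard form. First I would restrict attention to a single neighborhood $U_i\cong(\prt\Si^b)_i\times(-2,2)$, on which $\si$ acts by $(x,\tau)\mapsto(x,-\tau)$; the quotient $q_\Si(U_i)$ is a neighborhood of $S^2_i$ in $\Si'$, while $U_i$ itself deformation retracts onto the circle $(\prt\Si^b)_i\times 0$. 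The restriction $(V,\vph)|_{U_i}$ is therefore, up to isomorphism of real bundle pairs, pulled back from $(\prt\Si^b)_i\times 0$ with its (trivial or antipodal, according to $|c_i|$) involution.

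Next I would classify rank-$n$ real bundle pairs over a circle with these two involutions. For $|c_i|=1$ (the free antipodal action $\fa$) the quotient is again a circle and $(V,\vph)$ descends to an ordinary complex bundle over $S^1$, which is trivial; hence $(V,\vph)|_{U_i}$ is isomorphic to the trivial rank-$n$ pair, matching the first option in the definition of \emph{standard}. For $|c_i|=0$ (trivial involution) $(V,\vph)$ is the complexification of the real bundle $V^\vph$ over $(\prt\Si^b)_i$; a real bundle over a circle is $E^k\oplus(S^1\times\R^{n-k})$ where $E$ is the Möbius line bundle and $k\in\{0,1\}$ is determined by $w_1$, so $(V,\vph)|_{U_i}$ is isomorphic either to the trivial pair or to $(\ga_i,\wt\si_i')|_{\hbox{\scriptsize nbhd}}\oplus(\hbox{trivial rank }n{-}1)$ — exactly the two standard options. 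Having fixed such an isomorphism over $(\prt\Si^b)_i\times 0$, I extend it over all of $U_i$ (possible since $U_i$ retracts to the core circle, and one can carry out the retraction $\si$-equivariantly), obtaining an isomorphism of $(V,\vph)|_{U_i}$ with a bundle pair that is constant in the $\tau$ direction and hence literally the pullback under $q_\Si|_{U_i}$ of a standard real bundle pair on $S^2_i$.

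Finally I would glue: away from the $U_i$'s, $q_\Si$ is a homeomorphism onto its image $\Si'\setminus\{\hbox{interiors of central spheres}\}$, so $(V,\vph)$ there descends tautologically; on each $U_i$ we have arranged, via the isomorphism just constructed, that $(V,\vph)$ descends to the chosen standard pair on $S^2_i$. The two descriptions agree on the overlaps (the annular regions $0<|\tau|<1$, which map homeomorphically), because the descent there is canonical, so they patch to a genuine real bundle pair $(V',\vph')$ over $(\Si',\si')$ with $q_\Si^*(V',\vph')\cong(V,\vph)$ and standard restriction to every central component. The main obstacle is the gluing/extension step: one must check that the isomorphism chosen over the core circle $(\prt\Si^b)_i\times 0$ can be propagated $\si$-equivariantly over $U_i$ \emph{and} made to agree, on the region $|\tau|>1$ that is \emph{not} collapsed, with the already-fixed identification of $(V,\vph)$ outside $U_i$ — i.e. one needs enough flexibility in the choice of trivialization to match a prescribed isomorphism on the outer boundary circles $\tau=\pm(1,2)$. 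This is handled by the usual homotopy-extension argument, using that $\GL_n(\C)$ and $\GL_n(\R)$ are path-connected in the relevant components (and invoking, if needed, the extension results for isomorphisms of real bundle pairs established later in the paper), but it is where the real work lies.
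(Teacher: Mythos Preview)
Your approach is essentially the paper's: classify $(V,\vph)$ over each tubular neighborhood $U_i$ by reducing to the core circle, put it into standard form there, and use the resulting trivializations near the pinching circles $\tau=\pm1$ to descend through~$q_\Si$.

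Two remarks. First, your argument in the $|c_i|=1$ case is not quite right: a Real bundle pair over $(S^1,\fa)$ does \emph{not} descend to an ordinary complex bundle over the quotient circle, because the conjugation $\vph$ is $\C$-antilinear and so does not commute with the complex structure on the fibers. The conclusion (triviality of every rank~$n$ real bundle pair over $(S^1,\fa)$) is correct, but it needs a separate argument; the paper simply cites \cite[Lemma~2.4]{Teh} for this. Second, the ``main obstacle'' you flag at the end is not actually present. You are not replacing $(V,\vph)|_{U_i}$ by a different bundle that must then be glued back to $(V,\vph)|_{\Si\setminus U_i}$; rather, the standard-form isomorphism on $U_i$ is used only to produce a trivialization of $(V,\vph)$ in a neighborhood of each circle $\tau=\pm1$, and such a trivialization is precisely the data needed to descend $(V,\vph)$ itself through the collapse~$q_\Si$. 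There is no matching condition on the region $|\tau|>1$ to worry about, and no appeal to later extension results is needed.
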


\begin{proof}
Let $n\!=\!\rk_{\C}V$.
If $|c_i|\!=\!0$, define a rank~1 real bundle pair $(\wt\ga_i,\wt\si_i)$ over~$(U_i,\si)$~by
\begin{gather*}
\wt\ga_i\equiv \big((\prt\Si^b)_i\!\times\![0,2)\!\times\!\C \sqcup
(\prt\Si^b)_i\!\times\!(-2,0]\!\times\!\C\big)/\!\!\sim,\\
\qquad (\prt\Si^b)_i\!\times\![0,2)\!\times\!\C\ni(x,0,v)\sim
\big(x,0,f_i(x)v\big)\in(\prt\Si^b)_i\!\times\!(-2,0]\!\times\!\C,\\
\wt\si_i(x,\tau,v)=\big(x,-\tau,\bar{v}\big).
\end{gather*}
If in addition the restriction of $V^{\vph}$ to $(\prt\Si^b)_i$ is not orientable,
then 
$$V^{\vph}\approx
\big\{(x,v)\!\in\!(\prt\Si^b)_i\!\times\!\C\!:\,f_i(x)v\!=\!\bar{v}\big\} 
\oplus(\prt\Si^b)_i\!\times\!\R^{n-1}$$
and thus 
$$(V,\vph)\big|_{U_i}\approx (\wt\ga_i,\wt\si_i)\!\oplus\!
\big(U_i\!\times\!\C^{n-1},\id\!\times\!\fc\big)$$
as real bundle pairs over~$(U_i,\si)$.
If the restriction of $V^{\vph}$ to $(\prt\Si^b)_i$ is instead orientable,
then $V^{\vph}|_{(\prt\Si^b)_i}\!\approx\!(\prt\Si^b)_i\!\times\!\R^n$ and thus
\BE{bndpull_e5}(V,\vph)\big|_{(\prt\Si^b)_i}\approx 
\big((\prt\Si^b)_i\!\times\!\C^n,\id\!\times\!\fc\big)
=\big((\prt\Si^b)_i\!\times\!\C^n,\si'\!\times\!\fc\big).\EE
as real bundle pairs over~$((\prt\Si^b)_i,\si)$.
If $|c_i|\!=\!1$, then
\BE{bndpull_e3}(V,\vph)\big|_{(\prt\Si^b)_i}\approx 
\big((\prt\Si^b)_i\!\times\!\C^n,\si\!\times\!\fc\big)\EE
as real bundle pairs over $((\prt\Si^b)_i,\si)\!=\!((\prt\Si^b)_i,\fa)$; 
see \cite[Lemma 2.4]{Teh}.
The isomorphisms~\eref{bndpull_e5} and~\eref{bndpull_e3} extend to isomorphisms
$$(V,\vph)\big|_{U_i}\approx 
\big(U_i\!\times\!\C^n,\si\!\times\!\fc\big)$$
of real bundle pairs over~$(U_i,\si)$.
In all three cases, the restriction $(V,\vph)$ to the union of small neighborhoods
 of the pinching circles $\tau\!=\!\pm1$ in each~$U_i$ is  trivialized as a real bundle pair.
Therefore, $(V,\vph)$ descends to a real bundle pair $(V',\vph')$ over~$(\Si',\si')$
such that $(V,\vph)$ is isomorphic to~$q_{\Si}^*(V',\vph')$.
By construction, the restriction of
$(V',\vph')$ to each central component~$(S^2_i,\si_i)$ is 
a standard real bundle pair.
\end{proof}

\begin{proof}[{\bf\emph{Proof of Proposition~\ref{BHH_prp}}}] 
The necessity of the conditions is clear. 
By Lemma~\ref{bndpull_lmm}, it thus remains to show that real bundle pairs 
$(V_1',\vph_1')$ and $(V_2',\vph_2')$ over~$(\Si',\si')$  that
restrict to the same standard  real bundle pair on each central component~$S^2_i$
and to  bundles of the same degree over~$\Si'_+$ are isomorphic as real bundle 
pairs over~$(\Si',\si')$.
The identifications of   $(V_1',\vph_1')$ and $(V_2',\vph_2')$ over the central components
determine identifications of the restrictions of $V_1'|_{\Si'_{\pm}}$ and 
$V_2'|_{\Si'_{\pm}}$ at the nodes carried by~$\Si'_{\pm}$ that commute with~$\vph_1'$
and~$\vph_2'$.
Since $V_1'|_{\Si'_+}$ and $V_2'|_{\Si'_+}$ are complex bundles of the same degree
and rank and $\GL_n\C$ is connected,
we can choose an isomorphism~$\Psi_+'$ between them that respects the identifications
at the nodal points.
Let 
$$\Psi_-'= \vph_2\!\circ\!\Psi_+'\!\circ\!\vph_1\!:  
V_1'|_{\Si'_-}\lra V_2'|_{\Si'_-}\,.$$
This isomorphism again respects  the identifications at the nodal points.
We~take 
$$\Psi'\!: (V_1',\vph_1')\lra (V_2',\vph_2')$$
to be the identity on the central components of~$\Si'$ and $\Psi_{\pm}'$ on~$\Si'_+$.
This is a well-defined isomorphism of real bundle pairs.
\end{proof}

\section{Construction of automorphisms}
\label{Isom_sec}

\noindent
In this section, we establish the surjectivity of the homomorphism~\eref{SLrest_e}
and its $\GL(V,\vph)$ version and show that 
there is no obstruction to lifting paths with basepoints.

\begin{prp}\label{IsomExtend_prp}
Let $(\Si,\si)$ be a symmetric surface and $(V,\vph)$ be a real bundle pair
over~$(\Si,\si)$.
Then the homomorphism~\eref{SLrest_e}
is surjective.
Furthermore, every path $\psi_t$ in $\SL(V^{\vph})$ passing through 
$\Psi|_{V^{\vph}}$ for some $\Psi\!\in\!\SL(V,\vph)$ lifts to a path 
in $\SL(V,\vph)$ passing through~$\Psi$. 
The same is the case with $\SL(V^{\vph})$ and $\SL(V,\vph)$ replaced by 
$\GL(V^{\vph})$ and $\GL(V,\vph)$, respectively.
In all cases, the lifts can be chosen to restrict to the identity outside
of an arbitrary small neighborhood of~$\Si^{\si}$. 
\end{prp}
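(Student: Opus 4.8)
The plan is to reduce everything to a local problem near $\Si^{\si}$ and then solve that local problem by hand. The key observation is that the fixed locus $\Si^{\si}$ has a $\si$-invariant tubular neighborhood $N$ which is a disjoint union of pieces of three model types: an annulus $S^1\!\times\!(-2,2)$ with $\si(x,\tau)\!=\!(x,-\tau)$ around a fixed circle of $\Si^{\si}$, a Möbius-type neighborhood around an $E$-node, and a neighborhood of an $H$-node. On each of these models, the problem is to extend a given automorphism $\psi$ of $V^{\vph}$ over $\Si^{\si}$ — or a given path $\psi_t$ through $\Psi|_{V^{\vph}}$ — to an automorphism of the real bundle pair $(V,\vph)$ over $N$ that agrees with the already-given $\Psi$ near $\partial N$. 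Once this is done on each model piece, one glues: outside $N$ one takes the lift to be $\Psi$ itself (respectively the constant path at $\Psi$), and the compatibility near $\partial N$ makes the result a well-defined global automorphism. This is exactly the last sentence's assertion that the lift can be taken to be the identity — more precisely, equal to $\Psi$ — outside an arbitrarily small neighborhood of $\Si^{\si}$, since $N$ can be shrunk at will.

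Next I would set up the local extension. Over the annular model $S^1\!\times\!(-2,2)$, a real bundle pair splits (after the normal-form analysis already used in Lemma~\ref{bndpull_lmm}) as a sum of the trivial pair and possibly one copy of $(\wt\ga_i,\wt\si_i)$, so it suffices to treat $(V,\vph)\!=\!(U_i\!\times\!\C^n,\si\!\times\!\fc)$ and the rank-one twisted pair separately. For the trivial pair, an automorphism of $(V,\vph)$ over $S^1\!\times\!(-2,2)$ is a map $g\!:S^1\!\times\!(-2,2)\!\to\!\GL_n\C$ with $g(x,-\tau)\!=\!\ov{g(x,\tau)}$, and its restriction to $V^{\vph}$ over $S^1\!\times\!0$ is $g(\cdot,0)\!:S^1\!\to\!\GL_n\R$; I must extend a given real-valued loop, matching $\Psi$ at $\tau\!=\!\pm 3/2$ say, which is a straightforward interpolation in the contractible-into-$\GL_n\C$ sense, using that $\GL_n\C$ is connected (and for the $\SL$ case, that $\SL_n\C$ is connected and simply connected so no $\det$ obstruction appears). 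The path version is the same interpolation carried out with a parameter. The twisted rank-one summand is handled by the explicit clutching description of $(\wt\ga_i,\wt\si_i)$: an automorphism is again a function into $\C^*$ with the conjugation-compatibility and the clutching-compatibility with $f_i$, and one extends the given $\R^*$-valued function on the core circle. The $E$-node and $H$-node models are treated the same way; there the local picture is a disk or a pair of disks with an involution, the fixed locus is a point or an arc/circle, and again the extension is interpolation into $\GL_n\C$ (or $\SL_n\C$), with the surjectivity onto $\cC(\Si^{\si};\R^*)$-type data being immediate because a function or path on a circle or point always extends inward.

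The main obstacle I expect is bookkeeping rather than topology: ensuring that the three separate local constructions (and the splitting into trivial plus twisted summands) are carried out compatibly enough that the resulting global section is continuous and genuinely lies in $\GL(V,\vph)$ — i.e.\ commutes with $\vph$ — and, in the $\SL$ case, that the determinant correction can be absorbed without disturbing the restriction to $V^{\vph}$. The determinant point is where some care is needed: after extending $\psi_t$ to $\wt\psi_t\!\in\!\GL(V,\vph)$ on $N$, the function ${\det}_\C\wt\psi_t$ is a path in $\cC(N,\si;\C^*)$ starting and ending at $1$ on $\partial N$ and restricting to ${\det}_\R\psi_t\!=\!1$ on $\Si^{\si}$; because $\cC(N,\si;\C^*)$ is path-connected (its elements are determined up to homotopy by a class in $H^1$ of the quotient, and one has enough room in the collar to kill it while fixing the boundary and the real locus), one can multiply $\wt\psi_t$ by a scalar path that cancels the determinant without changing $\wt\psi_t|_{V^{\vph}}$ or $\wt\psi_t|_{\partial N}$. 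I would isolate this determinant-killing as a short lemma about $\cC(N,\si;\C^*)$ on the collar models, prove it by the explicit clutching descriptions above, and then the rest of Proposition~\ref{IsomExtend_prp} follows by assembling the pieces.
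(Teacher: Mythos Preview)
Your plan is essentially the paper's: extend locally on a $\si$-invariant collar of $\Si^{\si}$ and set the lift equal to the identity (or to $\Psi$) outside. The paper's execution is leaner in three ways worth noting. First, it passes to the normalization $(\wt\Si,\wt\si)$ at the outset, so the fixed locus becomes a disjoint union of smooth circles and only the annular model is needed; $E$-nodes become conjugate point pairs off $\wt\Si^{\wt\si}$ (handled by the one-line Lemma~\ref{IsomCextend_lmm}) and $H$-nodes become pairs of points on $\wt\Si^{\wt\si}$ where compatibility with the gluing is automatic from~$\psi$. Second, your summand splitting via Lemma~\ref{bndpull_lmm} and the subsequent determinant correction are both unnecessary: over a fixed circle the \emph{complex} bundle $V|_{\Si^{\si}_i}$ is trivial regardless of $V^{\vph}|_{\Si^{\si}_i}$, so $\psi_t$ is simply a path of loops in $\SL_n\C$, and $\pi_1(\SL_n\C)=0$ contracts these to a continuous family $H_{t,\tau}$ which one uses directly on the half-collar (with its $\vph$-conjugate on the other half), landing in $\SL$ from the start with no fix-up. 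For $\GL$ the same argument works because the loops lie in $\{A\!\in\!\GL_n\C\!:\det_{\C}A\!\in\!\R^*\}$, which is again simply connected; note that ``$\GL_n\C$ is connected'' is not the right hypothesis here, since $\pi_1(\GL_n\C)=\Z$. Third, the paper lifts a path through a prescribed $\Psi$ not by interpolating toward $\Psi|_{\partial N}$ but by first producing \emph{any} lift $\Phi_t$ of $\psi_t$ via the construction above and then setting $\Psi_t=\Psi\circ\Phi_0^{-1}\circ\Phi_t$, which is shorter than the relative extension you sketch.
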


\begin{lmm}\label{IsomExtend_lmm}
Let $(\Si,\si)$ be a smooth symmetric surface with fixed components $\Si^{\si}_1,\ldots,\Si^{\si}_m$
and $(V,\vph)$ be a real bundle pair over~$(\Si,\si)$.
For every $i=1,\ldots,m$ and a path $\psi_t$ in $\SL(V^{\vph}|_{\Si^{\si}_i})$,
there exists a path $\Psi_t$ in $\SL(V,\vph)$ such that each $\Psi_t$ is the identity outside 
of an arbitrarily small neighborhood of $\Si^{\si}_i$ and
restricts to~$\psi_t$ on $V^{\vph}|_{\Si^{\si}_i}$.
The same is the case with $\SL(V^{\vph}|_{\Si^{\si}_i})$ and $\SL(V,\vph)$ replaced by 
$\GL(V^{\vph}|_{\Si^{\si}_i})$ and $\GL(V,\vph)$, respectively.
\end{lmm}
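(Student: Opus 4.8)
The plan is to work entirely inside a fixed tubular neighborhood $U_i$ of the circle $\Si^{\si}_i$, identified $\si$-equivariantly with $\Si^{\si}_i\!\times\!(-2,2)$ so that $\si(x,\tau)\!=\!(x,-\tau)$, and to spread the given path $\psi_t$ out from the core circle by a cutoff in the $\tau$-direction. Since $U_i$ retracts onto $\Si^{\si}_i$, after choosing a $\vph$-compatible trivialization of $(V,\vph)$ over $U_i$ we may identify $(V,\vph)|_{U_i}$ with the pullback of $(V,\vph)|_{\Si^{\si}_i}$ under the projection $U_i\!\lra\!\Si^{\si}_i$; such a trivialization exists because a real bundle pair over $(\Si^{\si}_i\!\times\!(-2,2),\si)$ is pulled back from its restriction to the fixed circle, the obstruction living in a contractible parameter space. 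Under this identification a bundle automorphism of $(V,\vph)|_{U_i}$ commuting with $\vph$ is the same data as a map $U_i\!\lra\!\GL(V^{\vph}|_{\Si^{\si}_i})$ that is $\si$-invariant in the appropriate sense; concretely, over the point $(x,\tau)$ it must be the value at $x$ of an element of $\GL(V^{\vph})$, with no constraint relating the $\tau$ and $-\tau$ fibers beyond what $\vph$-equivariance already forces, which here reduces to a single unconstrained $\GL(V^{\vph}|_{\Si^{\si}_i})$-valued function on $\Si^{\si}_i\!\times\!(-2,2)$.

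First I would fix a smooth cutoff function $\beta\!:(-2,2)\!\lra\![0,1]$ with $\beta\!\equiv\!1$ near $\tau\!=\!0$ and $\beta\!\equiv\!0$ for $|\tau|\!\ge\!\epsilon$, where $\epsilon$ is chosen so that $\Si^{\si}_i\!\times\!(-\epsilon,\epsilon)$ lies inside the prescribed small neighborhood. The naive guess $\Psi_t(x,\tau)\!=\!\psi_t(x)^{\beta(\tau)}$ does not literally make sense in a nonabelian group, so instead I would pass to the path level: write the given path as $\psi_t\!=\!g_t$ with $g_0$ arbitrary, replace it by $g_t g_0^{-1}$ to reduce to a path starting at the identity (compose back at the end, using that $\Psi_0$ extends $g_0$ trivially by taking the constant automorphism induced from $g_0$), and then define $\Psi_t(x,\tau)$ to be the value at time $\beta(\tau)\,t$... no: the clean device is to reparametrize. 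Define $\Psi_t$ over $U_i$ by $\Psi_t(x,\tau)\!=\!\psi_{\,t\beta(\tau)}(x)\circ\psi_{0}(x)^{-1}\circ(\text{the ambient }\psi_0\text{-automorphism})$; more simply, since $\GL(V^{\vph}|_{\Si^{\si}_i})$ is a topological group and the path is continuous, the map $(x,\tau,t)\!\mapsto\!\psi_{t\beta(\tau)}(x)$ is a continuous family of automorphisms over $U_i$, equal to $\psi_0$ on all of $U_i$ at $t\!=\!0$ and equal to $\psi_t$ on $\Si^{\si}_i$ for all $t$, and equal to $\psi_0$ outside $\Si^{\si}_i\!\times\!(-\epsilon,\epsilon)$. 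Extend by the constant automorphism $\psi_0$ (transported via the trivialization, hence globally defined since $\psi_0$ itself is the restriction of some $\Psi\!\in\!\GL(V,\vph)$, or by the identity after the $g_0^{-1}$ reduction) over all of $\Si\!\setminus\!U_i$. This $\Psi_t$ is continuous, commutes with $\vph$ because the construction is $\tau\!\mapsto\!-\tau$ symmetric and $\psi_{s}(x)$ commutes with the fiberwise conjugation by hypothesis, and restricts to $\psi_t$ on $V^{\vph}|_{\Si^{\si}_i}$.

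For the $\SL$ version I would simply observe that the same formula preserves the determinant condition: if each $\psi_s\!\in\!\SL(V^{\vph}|_{\Si^{\si}_i})$, then ${\det}_{\R}\psi_{t\beta(\tau)}\!\equiv\!1$, so ${\det}_{\C}\Psi_t\!\equiv\!1$ on $U_i$ and is $1$ elsewhere by the choice of extension, hence $\Psi_t\!\in\!\SL(V,\vph)$; here I use the identification of $\cC(\Si,\si;\C^*)$-valued determinants with real determinants on the fixed locus described before Theorem~\ref{auto_thm}. The main obstacle is the very first step: producing the $\vph$-equivariant trivialization of $(V,\vph)|_{U_i}$ as a pullback from $\Si^{\si}_i$, i.e.\ checking that a real bundle pair over the $\si$-invariant annulus $(\Si^{\si}_i\!\times\!(-2,2),\si)$ deformation-retracts onto its fixed-circle restriction compatibly with the conjugation. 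This is where one must be careful that the retraction of $U_i$ onto $\Si^{\si}_i$ is $\si$-equivariant (it is, since $\tau\!\mapsto\!s\tau$ commutes with $\tau\!\mapsto\!-\tau$) and that $\vph$ is carried along; once this is set up the rest is the routine cutoff argument sketched above.
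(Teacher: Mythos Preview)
Your construction has a genuine gap at the very step you flag as ``the main obstacle,'' but not for the reason you give. The equivariant retraction of $U_i$ onto $\Si^{\si}_i$ is fine; the problem is what your formula actually produces near the boundary of the tube. With $\Psi_t(x,\tau)=\psi_{t\beta(\tau)}(x)$ and $\beta\equiv 0$ for $|\tau|\ge\epsilon$, you get $\Psi_t(x,\tau)=\psi_0(x)$ there, not the identity. So your $\Psi_t$ cannot be extended by~$\Id$ over $\Si\setminus U_i$ as the lemma demands; it can only be extended by the pullback of $\psi_0$, which is not globally defined on~$\Si$.

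Your proposed fix---replace $\psi_t$ by $\psi_t\psi_0^{-1}$, solve, then compose with an extension of $\psi_0$---is circular: producing an element of $\SL(V,\vph)$ that restricts to $\psi_0$ on $\Si^{\si}_i$ and to the identity outside a tube is exactly the constant-path case of the lemma, and your formula gives nothing for a constant path (it returns the constant $\psi_0$ on all of $U_i$). The appeal to ``$\psi_0$ itself is the restriction of some $\Psi\in\GL(V,\vph)$'' assumes what is to be proved.

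What is missing is a homotopy in the \emph{normal} direction, not in the path-parameter $t$. The paper trivializes the complex bundle $V|_{\Si^{\si}_i}$ (every complex bundle over a circle is trivial), so each $\psi_t$ becomes a loop in $\SL_n\C$; since $\pi_1(\SL_n\C)=0$, there is a continuous family $H_{t,\tau}$ with $H_{t,0}=\psi_t$ and $H_{t,1}=\Id$. One then sets $\Psi_t|_{(x,\tau)}=H_{t,\tau}|_x$ for $\tau\in[0,1]$, the identity for $\tau\ge 1$, and $\vph\circ H_{t,-\tau}\circ\vph$ for $\tau\le 0$. Note that this null-homotopy must run through $\SL_n\C$ and not just $\SL_n\R$ (whose fundamental group is nontrivial), which is why the extension is defined asymmetrically in $\tau$ and then conjugated, rather than built from your $\vph$-equivariant pullback trivialization.
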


\begin{proof}
Let $n\!=\!\rk_{\C}\!V$ and $\bI\!=\![0,1]$.
Since every complex vector bundle over $\Si^{\si}_i$ is trivial,
$$\psi_t\in  \SL\big(V^{\vph}|_{\Si^{\si}_i}\big)\subset \SL\big(V|_{\Si^{\si}_i}\big)$$
determines a path of loops in~$\SL_n\C$.
Since $\pi_1(\SL_n\C)$ is trivial,
there exists a continuous~map
$$H\!:\bI^2\lra\SL\big(V|_{\Si^{\si}_i}\big),
~~(t,\tau)\lra H_{t,\tau}, \qquad\hbox{s.t.}\quad
H_{t,0}=\psi_t,~~H_{t,1}\!=\!\Id_{V|_{\Si^{\si}_i}}\quad\forall~t\!\in\!\bI.$$
Let $\Si^{\si}_i\!\times\!(-2,2)\!\lra\!\Si$ be a parametrization of a neighborhood~$U$ 
of $\Si^{\si}_i$ such that $\Si^{\si}_i\!\times\!0$ corresponds to~$\Si^{\si}_i$ and
$$\si(x,\tau)=(x,-\tau) \qquad\forall~(x,\tau)\!\in\!\Si^{\si}_i\!\times\!(-2,2).$$ 
Identifying $(V,\vph)|_U$ with $V|_{\Si^{\si}_i}\!\times\!(-2,2)$,
we define~$\Psi_t$ on~$U$~by   
$$\Psi_t|_{(x,\tau)}=
\begin{cases} 
H_{t,\tau}|_x, &\text{if}~ \tau\!\in\![0,1];\\
\Id_{V|_{\Si^{\si}_i}}, &\text{if}~\tau\!\in\![1,2);\\
\vph\!\circ\!H_{t,-\tau}|_x\!\circ\!\vph,&\text{if}~\tau\!\in\!(-2,0];\\
\end{cases}$$
and extend it as the identity over $\Si\!-\!U$.\\

\noindent
A similar argument applies with $\SL(V^{\vph}|_{\Si^{\si}_i})$ and $\SL(V,\vph)$ replaced by 
$\GL(V^{\vph}|_{\Si^{\si}_i})$ and $\GL(V,\vph)$, respectively.
A path $\psi_t$ in $\GL(V^{\vph}|_{\Si^{\si}_i})$ determines a path of loops~in
$$\big\{\psi\!\in\!\GL_n\C\!:\,{\det}_{\C}\!\psi\!\in\!\R^*\big\}.$$
These loops are again contractible, and the remainder of the above reasoning still applies. 
\end{proof}

\begin{crl}\label{IsomExtend_crl}
The first statement Proposition~\ref{IsomExtend_prp}, 
its analogue as in the third statement,
and its sharpening as in the fourth statement 
hold if $(\Si,\si)$ is a smooth symmetric surface.
\end{crl}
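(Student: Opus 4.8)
The plan is to reduce the corollary to Lemma~\ref{IsomExtend_lmm} by treating the fixed circles one at a time. I would write $\Si^{\si}\!=\!\Si^{\si}_1\!\sqcup\!\ldots\!\sqcup\!\Si^{\si}_m$ for the decomposition into connected components; restriction to the components identifies $\SL(V^{\vph})$ with $\prod_{i=1}^m\SL(V^{\vph}|_{\Si^{\si}_i})$ and $\GL(V^{\vph})$ with $\prod_{i=1}^m\GL(V^{\vph}|_{\Si^{\si}_i})$. Thus an element of $\SL(V^{\vph})$ is just a tuple $(\psi_1,\ldots,\psi_m)$ with $\psi_i\!\in\!\SL(V^{\vph}|_{\Si^{\si}_i})$, and similarly for~$\GL$.

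Given such a tuple and a neighborhood~$U$ of~$\Si^{\si}$ in~$\Si$, I would choose pairwise disjoint neighborhoods $U_i\!\subset\!U$ of the~$\Si^{\si}_i$ and, for each~$i$, apply Lemma~\ref{IsomExtend_lmm} to the constant path at~$\psi_i$. This produces $\Psi^{(i)}\!\in\!\SL(V,\vph)$ that restricts to~$\psi_i$ on $V^{\vph}|_{\Si^{\si}_i}$ and to the identity outside~$U_i$. Since the~$U_i$ are disjoint, each $\Si^{\si}_j$ with $j\!\neq\!i$ lies in the region where $\Psi^{(i)}$ is the identity, so the composition $\Psi\!=\!\Psi^{(1)}\!\circ\!\ldots\!\circ\!\Psi^{(m)}$ restricts to~$\psi_j$ on $V^{\vph}|_{\Si^{\si}_j}$ for every~$j$; hence $\Psi|_{V^{\vph}}$ is the given tuple, $\Psi\!\in\!\SL(V,\vph)$ by multiplicativity of~${\det}_{\C}$, and $\Psi$ is the identity outside $U_1\!\cup\!\ldots\!\cup\!U_m\!\subset\!U$. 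Running the same argument with the $\GL$ parts of Lemma~\ref{IsomExtend_lmm} handles the general-linear case. Altogether this gives the surjectivity of~\eref{SLrest_e}, the surjectivity of its $\GL$-analogue, and the sharpening that the preimage may be taken to be the identity outside any prescribed neighborhood of~$\Si^{\si}$.

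I do not expect a genuine obstacle here: the essential content — damping an automorphism over a fixed circle to the identity in the normal direction, using $\pi_1(\SL_n\C)\!=\!1$ — is already carried out in Lemma~\ref{IsomExtend_lmm}, and only the bookkeeping of composing automorphisms with disjoint supports remains. The one point to keep in mind is that Lemma~\ref{IsomExtend_lmm} permits the support of each~$\Psi^{(i)}$ to lie in an arbitrarily small neighborhood of~$\Si^{\si}_i$, which is precisely what lets the~$U_i$ be chosen disjoint and inside any given~$U$.
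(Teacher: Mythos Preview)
Your proposal is correct and matches the paper's proof essentially verbatim: the paper also decomposes $\Si^{\si}$ into components, applies Lemma~\ref{IsomExtend_lmm} to each to obtain $\Psi_i\!\in\!\SL(V,\vph)$ supported near~$\Si^{\si}_i$, and takes $\Psi\!=\!\Psi_1\!\circ\!\ldots\!\circ\!\Psi_m$. Your explicit remark that the neighborhoods~$U_i$ can be chosen disjoint and inside a prescribed~$U$ is exactly what justifies the ``sharpening'' clause.
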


\begin{proof}
Let $\Si^{\si}_1,\ldots,\Si^{\si}_m$ be the connected components of the fixed locus 
$\Si^{\si}\!\subset\!\Si$.
If $\psi\!\in\!\SL(V^{\vph})$, let $\Psi_i\!\in\!\SL(V,\vph)$ be an automorphism
as in Lemma~\ref{IsomExtend_lmm} corresponding to  the restriction of~$\psi$ to $V^{\vph}|_{\Si^{\si}_i}$
and define
$$\Psi=\Psi_1\circ\ldots\circ\Psi_m\in\SL(V,\vph).$$
Since each $\Psi_i$ is the identity outside of a small neighborhood of~$\Si^{\si}_i$,
$\Psi|_{V^{\vph}}\!=\!\psi$.\\

\noindent
The same arguments apply with $\SL(V^{\vph})$ and $\SL(V,\vph)$ replaced by 
$\GL(V^{\vph})$ and $\GL(V,\vph)$, respectively.
\end{proof}

\noindent
A nodal oriented surface~$\Si$ is obtained from a smooth  oriented surface~$\wt\Si$
by identifying the two points in each of finitely many disjoint pairs of points of~$\wt\Si$;
the images of these pairs in~$\Si$ are the \sf{nodes} of~$\Si$.
The surface~$\wt\Si$ is called \sf{the normalization of~$\Si$};
it is unique up to a diffeomorphism respecting the distinguished pairs of points.
An orientation-reversing involution~$\si$ on~$\Si$ lifts to 
an orientation-reversing involution~$\wt\si$ on~$\wt\Si$.
There are three distinct types of nodes a nodal symmetric surface~$(\Si,\si)$ may~have
\begin{enumerate}[leftmargin=.3in]

\item[(H)]\label{Hnode_it}  
a \sf{non-isolated real node} $x_{ij}$,  i.e.~$x_{ij}$ is not an isolated point of~$\Si^{\si}$
and is obtained by identifying distinct points $\wt{x}_i,\wt{x}_j\!\in\!\wt\Si^{\wt\si}$;

\item[(E)] an \sf{isolated real node} $x_i$, i.e.~$x_i$ is an isolated point of~$\Si^{\si}$
and is obtained by identifying a point $\wt{x}_i^+\!\in\!\wt\Si\!-\!\wt\Si^{\wt\si}$ 
with $\wt{x}_i^-\!=\!\wt\si(\wt{x}_i^+)$;

\item[(C)] a pair $\{x_{ij}^+,x_{ij}^-\}$ of \sf{conjugate nodes}, 
i.e.~$x_{ij}^{\pm}\!\not\in\!\Si^{\si}$ and $x_{ij}^-\!=\!\si(x_{ij}^+)$,
with $x_{ij}^{\pm}$ obtained by identifying distinct points 
$\wt{x}_i^{\pm},\wt{x}_j^{\pm}\!\in\!\wt\Si\!-\!\wt\Si^{\wt\si}$ 
such that $\wt{x}_i^-\!=\!\wt\si(\wt{x}_i^+)$ and $\wt{x}_j^-\!=\!\wt\si(\wt{x}_j^+)$.

\end{enumerate} 

\begin{proof}[{\bf\emph{Proof of Proposition~\ref{IsomExtend_prp}}}]
Let $\wt\Si\!\subset\!\Si$ be the normalization of~$\Si$ and
$S_H,S_E,S_C\!\subset\!\wt\Si$ be the preimages of the $H$, $E$, and~$C$ nodes of~$\Si$,
respectively.
Choose disjoint subsets $S_1,S_2\!\subset\!\wt\Si$ consisting of one point from
the preimage of each node of~$\Si$.
Let $\wt{V}\!\lra\!\wt\Si$ be a complex vector bundle and
$$\vt\!:\wt{V}\big|_{S_1}\lra \wt{V}\big|_{S_2}$$
be an isomorphism of complex vector bundles such~that 
$$V=\wt{V}\big/\!\!\sim, ~~ v\!\sim\!\vt(v)~\forall\,v\!\in\!\wt{V}\big|_{S_1}.$$
Denote by $\wt\vph$ the lift of $\vph$ to~$\wt{V}$.
Thus,  $(\wt{V},\wt\vph)$ is a real bundle pair over $(\wt\Si,\wt\si)$
that descends to the real bundle pair $(V,\vph)$.
An automorphism~$\wt\Psi$ of $(\wt{V},\wt\vph)$ descends to an automorphism of
$(V,\vph)$ if and only~if
\BE{IsomExtend_e5}\wt\Psi\circ\vt=\vt\circ\wt\Psi\big|_{\wt{V}|_{S_1}}\,.\EE\\

\noindent
An automorphism $\psi\!\in\!\SL(V^{\vph})$ induces automorphisms
\begin{alignat}{3}
\label{IsomExtend_e7a}
\wt\psi_{\R}&\in\SL\big(\wt{V}^{\wt\vph}\big) &\quad\hbox{s.t.}\quad
\wt\psi_{\R}\circ\vt&=\vt\circ\wt\psi_{\R}
&\quad\hbox{on}~~\wt{V}\big|_{S_1\cap S_H},\\
\label{IsomExtend_e7b}
\wt\psi_E&\in \SL\big(\wt{V}|_{S_E}\big) &\quad\hbox{s.t.}\quad
\wt\psi_E\circ\vt&=\vt\circ\wt\psi_E,~~
\wt\psi_E\circ\wt\vph=\wt\vph\circ\wt\psi_E
&\quad\hbox{on}~~\wt{V}\big|_{S_1\cap S_E}\,.
\end{alignat}
By Corollary~\ref{IsomExtend_crl}, there exists $\wt\Psi'\!\in\!\SL(\wt{V},\wt\vph)$
such that $\wt\Psi'|_{\wt{V}^{\vph}}\!=\!\wt\psi_{\R}$ and $\wt\Psi'$ restricts to
the identity outside of an arbitrarily small neighborhood of~$\wt\Si^{\wt\si}$.
By the second condition on~$\wt\Psi'$, we can assume that $\wt\Psi'$ restricts
to the identity over~$S_C$ and over a collection of small disjoint neighborhoods
$U_{\wt{x}}$ of the elements $\wt{x}\!\in\!S_E$ not intersecting~$S_C$.
By~\eref{IsomExtend_e7a} and the first condition on~$\wt\Psi'$,
\BE{IsomExtend_e9}
\wt\Psi'\circ\vt
=\vt\circ\wt\Psi'\big|_{\wt{V}|_{S_1}}\,.\EE
By Lemma~\ref{IsomCextend_lmm} below, for each $\wt{x}\!\in\!S_1\!\cap\!S_E$  
there exists 
$\wt\Psi_{\wt{x}}\!\in\!\SL(\wt{V},\wt\vph)$ such that $\wt\Psi_{\wt{x}}|_{\wt{x}}\!=\!\wt\psi_E$
and $\wt\Psi_{\wt{x}}\!=\!\Id_{\wt{V}}$ outside of~$U_{\wt{x}}\!\cup\!U_{\wt\si(\wt{x})}$.
By~\eref{IsomExtend_e7b} and the two conditions on~$\wt\Psi_{\wt{x}}$,
\BE{IsomExtend_e11}
\wt\Psi_{\wt{x}}\circ\vt
=\vt\circ\wt\Psi_{\wt{x}}\big|_{\wt{V}|_{S_1}}\,.\EE
Let $\wt\Psi$ be the composition of the automorphisms $\wt\Psi'$ and
$\wt\Psi_{\wt{x}}$ with $\wt{x}\!\in\!S_1\!\cap\!S_E$.
Since the subsets of~$\wt\Si$ where these automorphisms differ from the identity are disjoint,
$\wt\Psi$ does not depend on their ordering in this composition and satisfies
\BE{IsomExtend_e15}
\wt\Psi\big|_{\wt{V}^{\vph}}=\wt\psi_{\R}\,, \qquad
\wt\Psi\big|_{\wt{V}|_{S_1\cap S_E}}=\wt\psi_E\,.\EE
By~\eref{IsomExtend_e5}, \eref{IsomExtend_e9}, and~\eref{IsomExtend_e11}, 
$\wt\Psi$ descends to an element $\Psi\!\in\!\SL(V,\vph)$.
By~\eref{IsomExtend_e15}, the latter satisfies $\Psi|_{V^{\vph}}\!=\!\psi$.\\

\noindent
Suppose $\Psi\!\in\!\SL(V,\vph)$ and $\psi_t$ is a path in $\SL(V^{\vph})$ 
such that $\Psi|_{V^{\vph}}\!=\!\psi_0$.
Let $\Phi_t\!\in\!\SL(V,\vph)$ be a path of automorphisms 
with $\Phi_t|_{V^{\vph}}\!=\!\psi_t$ constructed as above
and define
$$\Psi_t=\Psi\!\circ\!\Phi_0^{-1}\!\circ\!\Phi_t.$$
Since $\Psi|_{V^{\vph}}\!=\!\psi_0$ and $\Phi_t|_{V^{\vph}}\!=\!\psi_t$,
$\Psi_t|_{V^{\vph}}\!=\!\psi_t$.\\  

\noindent
The same arguments apply with $\SL(V^{\vph})$ and $\SL(V,\vph)$ replaced by 
$\GL(V^{\vph})$ and $\GL(V,\vph)$, respectively.
\end{proof}

\begin{lmm}\label{IsomCextend_lmm}
Let $(\Si,\si)$ be a symmetric surface and $(V,\vph)$ be a real bundle pair
over~$(\Si,\si)$.
For every $x\!\in\!\Si\!-\!\Si^{\si}$, an open neighborhood $U\!\subset\!\Si$ of~$x$, 
and a path $\psi_{t;x}\!\in\!\SL(V_x)$,
there exists a path $\Psi_t\!\in\!\SL(V,\vph)$ such that $\Psi_t|_x\!=\!\psi_{t;x}$
and $\Psi_t\!=\!\Id$ on $\Si\!-\!U\!\cup\!\si(U)$.
The same is the case with $\SL(V_x)$ and $\SL(V,\vph)$ replaced by 
$\GL(V_x)$ and $\GL(V,\vph)$, respectively.
\end{lmm}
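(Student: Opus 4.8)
\noindent
The plan is to construct $\Psi_t$ by hand, supported in $U\!\cup\!\si(U)$, using only that $V$ is trivial near~$x$ and that $\SL_n\C$ (resp.\ $\GL_n\C$) is path-connected; the one step with any content will be promoting the path $\psi_{t;x}$ to a family of automorphisms on a neighborhood of~$x$.

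\smallskip\noindent
\emph{Reduction.} Since $x\!\notin\!\Si^{\si}$, the points $x$ and $\si(x)$ are distinct, so after shrinking~$U$ I may assume that $\ov U\!\cap\!\si(\ov U)\!=\!\eset$ and that $V$ is trivial over an open neighborhood of~$\ov U$ that is contractible (a disk if $x$ is a smooth point, a wedge of two disks if $x$ is a node, in which case $\si(x)$ is the conjugate node). Fix a trivialization $V|_{\ov U}\!\cong\!\ov U\!\times\!\C^n$ with $n\!=\!\rk_\C\!V$, and choose a continuous function $\rho\!:\Si\!\lra\![0,1]$ supported in~$U$ with $\rho(x)\!=\!1$.

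\smallskip\noindent
\emph{Main step.} Through the trivialization, $\psi_{t;x}$ reads off as a path $t\!\mapsto\!\psi_{t;x}$ in $\SL_n\C$, and I want to extend it to a continuous map
$$F\!:\bI^2\lra\SL_n\C \qquad\text{with}\qquad F(t,0)=\psi_{t;x},\quad F(t,1)=\Id\quad\forall\,t\!\in\!\bI.$$
Such an~$F$ is precisely a homotopy, with freely moving endpoints, from the path $\psi_{\bullet;x}$ to the constant path at the identity, and it exists because the path space of the path-connected space $\SL_n\C$ is again path-connected. (Concretely: take the bottom edge of $\bI^2$ to be $\psi_{\bullet;x}$, the top edge to be constant, the left edge to be any path from $\psi_{0;x}$ to~$\Id$, and the right edge to be the reverse of the bottom edge followed by the left edge; the boundary loop of $\bI^2$ is then null-homotopic in $\SL_n\C$, so $F$ extends over the square.) Given~$F$, set
$$\Psi_t|_z=F\big(t,1\!-\!\rho(z)\big) \quad\text{for }z\!\in\!U, \qquad \Psi_t|_z=\vph\circ\big(\Psi_t|_{\si(z)}\big)\circ\vph \quad\text{for }z\!\in\!\si(U),$$
and $\Psi_t|_z\!=\!\Id$ for $z\!\in\!\Si\!-\!(U\!\cup\!\si(U))$. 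Since $\rho\!\equiv\!0$ near $\prt U$, the three formulas agree near the seams, so each $\Psi_t$ is a well-defined automorphism of~$V$, continuous in $(t,z)$, with $\Psi_t|_x\!=\!\psi_{t;x}$ (because $\rho(x)\!=\!1$) and $\Psi_t\!=\!\Id$ outside $U\!\cup\!\si(U)$.

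\smallskip\noindent
\emph{Verification.} It remains to check that $\Psi_t\!\in\!\SL(V,\vph)$, which is routine bookkeeping. Over~$U$ the fiberwise determinant equals~$1$ since $F$ lands in $\SL_n\C$; over $\si(U)$ it equals $\ov{{\det}_{\C}(\Psi_t|_{\si(z)})}\!=\!\ov 1\!=\!1$, because conjugating a $\C$-linear endomorphism by the anti-complex-linear~$\vph$ conjugates its determinant; and it is~$1$ off $U\!\cup\!\si(U)$. Hence $\La_{\C}^{\top}\Psi_t\!=\!\id$. The relation $\Psi_t\!\circ\!\vph\!=\!\vph\!\circ\!\Psi_t$ holds over $U$ and $\si(U)$ by the defining formula on $\si(U)$ together with $\vph\!\circ\!\vph\!=\!\id$, and trivially over the $\si$-invariant set $\Si\!-\!(U\!\cup\!\si(U))$. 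The $\GL$ statement is proved verbatim, replacing $\SL_n\C$ by $\GL_n\C$ (still path-connected, so $F$ still exists) and dropping the determinant check. The only genuine point is thus the existence of~$F$, i.e.\ that the given path can be joined through paths in $\SL_n\C$ (resp.\ $\GL_n\C$) to the constant identity path; all the rest is housekeeping about supports and about the compatibility of~$\vph$ with $\C$-linearity.
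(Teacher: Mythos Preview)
Your proof is correct and follows essentially the same route as the paper's: shrink~$U$ so that $U$ and $\si(U)$ are disjoint, trivialize~$V$ there, use path-connectedness of~$\SL_n\C$ (equivalently, connectedness of its free path space) to produce a homotopy $F$ from the given path to the constant identity path, then spread this over~$U$ via a bump function and extend to~$\si(U)$ by conjugating with~$\vph$. The paper's proof differs only in cosmetic conventions (it takes $\rho(x)\!=\!0$ and $\rho\!\equiv\!1$ off $U\!\cup\!\si(U)$, with $\rho$ chosen $\si$-invariant, and works directly in~$\SL(V_x)$ rather than~$\SL_n\C$), and it simply asserts the existence of the square~$H$ from connectedness rather than spelling out the boundary-filling argument you give.
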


\begin{proof}
By shrinking~$U$, we can assume that $U\!\cap\!\si(U)\!=\!\eset$
and that $V|_U\!=\!U\!\times\!V_x$.
Let $\rho\!:\Si\!\lra\![0,1]$ be a smooth $\si$-invariant function such that 
$\rho(x)\!=\!0$ and $\rho\!=\!1$ on $\Si\!-\!U\!\cup\!\si(U)$.
Since $\SL(V_x)$ is connected, there exists a continuous~map
$$H\!:\bI^2\lra\SL(V_x),
~~(t,\tau)\lra H_{t,\tau}, \quad\hbox{s.t.}~~~
H_{t,0}=\psi_{t;x},~~H_{t,1}\!=\!\Id_{V_x}\quad\forall~t\!\in\!\bI.$$
The path $\Psi_t\!\in\!\SL(V,\vph)$ given~by
$$\Psi_t(z)=\begin{cases} H_{t,\rho(z)},&\hbox{if}~z\!\in\!U;\\
 \vph\!\circ\!H_{t,\rho(z)}\!\circ\!\vph,&\hbox{if}~z\!\in\!\si(U);\\
\Id_{V_x},&\hbox{if}~z\!\not\in\!U\!\cup\!\si(U);
\end{cases}$$
has the desired properties.
The same arguments apply with $\SL(V_x)$ and $\SL(V,\vph)$ replaced by 
$\GL(V_x)$ and $\GL(V,\vph)$, respectively.
\end{proof}

\section{Construction of isomorphisms}
\label{IsomExist_sec}

\noindent
We will next establish Theorem~\ref{nodalBHH_thm} by induction on the number
of nodes from the base case of Proposition~\ref{BHH_prp}.
We will need the following lemma.

\begin{lmm}\label{LinAlg_lmm}
Let $(V,\fI)$ be a finite-dimensional complex vector space and $A,B\!:V\!\lra\!V$
be \hbox{$\C$-antilinear} isomorphisms such that $A^2,B^2\!=\!\Id_V$.
Then there exists a $\C$-linear isomorphism $\psi\!:V\!\lra\!V$ such that 
$\psi\!=\!A\!\circ\!\psi\!\circ\!B$.
If 
\BE{LinAlg_e}\big\{\La_{\C}^{\top}A\big\}\!\circ\!\big\{\La_{\C}^{\top}B\big\}
=\big\{\La_{\C}^{\top}B\big\}\!\circ\!\big\{\La_{\C}^{\top}A\big\}\!:
\La_{\C}^{\top}V\lra \La_{\C}^{\top}V,\EE
then $\psi$ can be chosen so that $\La_{\C}^{\top}\psi\!=\!\Id$.
\end{lmm}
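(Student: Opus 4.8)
The plan is to fix a $\C$-antilinear isomorphism $A$ with $A^2=\Id$ and note that it makes $V$ into a module over the quaternions-or-$2\times 2$-matrices algebra generated by $\fI$ and $A$; concretely, $A$ endows $V$ with a real structure, so $V=V^A\otimes_{\R}\C$, where $V^A$ is the fixed real subspace of $A$. Similarly $B$ gives $V=V^B\otimes_{\R}\C$. A $\C$-linear $\psi$ satisfying $\psi=A\circ\psi\circ B$ is precisely one that carries $V^B$ into $V^A$: indeed, for $v\in V^B$ one has $Bv=v$, so the identity $\psi=A\psi B$ reads $\psi v=A\psi v$, i.e.\ $\psi v\in V^A$; conversely a $\C$-linear map sending $V^B$ to $V^A$ automatically intertwines $A$ and $B$ on all of $V$ since both sides of $\psi=A\psi B$ are $\C$-linear and agree on the real form $V^B$. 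Since $\dim_{\R}V^A=\dim_{\C}V=\dim_{\R}V^B$, any $\R$-linear isomorphism $V^B\to V^A$ complexifies to the desired $\psi$; picking any basis on each side gives one. This settles the first claim.

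For the second claim I would track what $\La^{\top}_{\C}\psi$ becomes under this description. The top exterior power $\La^{\top}_{\C}V$ is a $1$-dimensional complex line, and $\La^{\top}_{\C}A$, $\La^{\top}_{\C}B$ are $\C$-antilinear involutions of it; hypothesis~\eref{LinAlg_e} says they commute, and since a product of two $\C$-antilinear maps is $\C$-linear, $\{\La^{\top}_{\C}A\}\circ\{\La^{\top}_{\C}B\}$ is then a $\C$-linear involution of a complex line, hence $\pm\Id$. If $\psi$ is any isomorphism as produced above, then $\La^{\top}_{\C}\psi$ is determined up to a nonzero complex scalar by the freedom in choosing the $\R$-isomorphism $V^B\to V^A$; precisely, replacing $\psi$ by $\psi\circ(\lambda\,\Id)$ for $\lambda\in\C^*$ multiplies $\La^{\top}_{\C}\psi$ by $\lambda^n$, where $n=\dim_{\C}V$, and replacing $\psi$ by a different real isomorphism multiplies $\La^{\top}_{\C}\psi$ by the determinant of the transition, which (since both are real isomorphisms between real forms) can be taken to be any positive real number by scaling, and any real number by a further composition. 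So the set of achievable values of $\La^{\top}_{\C}\psi$ is either all of $\C^*$ or all of $\C^*$ after we additionally use the $\lambda^n$ scaling — in any case a subgroup containing $\R^{>0}$ and closed under multiplication by $\C^*$, hence all of $\C^*$, provided $\La^{\top}_{\C}\psi$ lands in the correct coset. The compatibility~\eref{LinAlg_e} is exactly what guarantees there is no parity obstruction to hitting $\Id$: one computes that for $\psi=A\psi B$ the element $\La^{\top}_{\C}\psi$ satisfies $\La^{\top}_{\C}\psi=\{\La^{\top}_{\C}A\}\circ\overline{\La^{\top}_{\C}\psi}\circ\{\La^{\top}_{\C}B\}$ (conjugation being with respect to $B$'s real form on the line), and commutativity of the two involutions forces the scalar factor to be free to be adjusted to $1$. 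Concretely, I would pick an $A$-real basis $e_1,\dots,e_n$ of $V^A$ and a $B$-real basis $f_1,\dots,f_n$ of $V^B$ and choose $\psi$ to send $f_j\mapsto e_j$; then $\La^{\top}_{\C}\psi$ sends $f_1\wedge\cdots\wedge f_n\mapsto e_1\wedge\cdots\wedge e_n$, and I would show~\eref{LinAlg_e} is equivalent to the statement that $e_1\wedge\cdots\wedge e_n$ and $f_1\wedge\cdots\wedge f_n$ differ by a real scalar in $\La^{\top}_{\C}V$, after which rescaling $\psi$ by an $n$-th root of that scalar (a real number) gives $\La^{\top}_{\C}\psi=\Id$.

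The main obstacle I anticipate is the bookkeeping in the last step: verifying that condition~\eref{LinAlg_e}, which is phrased as commutativity of two antilinear involutions on the determinant line, is equivalent to the two real volume forms $e_1\wedge\cdots\wedge e_n$ and $f_1\wedge\cdots\wedge f_n$ being positively (or at least really) proportional, so that the needed rescaling factor is a positive or real number and an honest $n$-th root exists. Without~\eref{LinAlg_e} the two involutions $\La^{\top}_{\C}A$ and $\La^{\top}_{\C}B$ could anticommute, i.e.\ their composite is $-\Id$, which would force $\La^{\top}_{\C}\psi$ into the wrong coset and one could only achieve $\La^{\top}_{\C}\psi=\Id$ after an extra sign that may be obstructed when $n$ is even; so isolating exactly where~\eref{LinAlg_e} is used, and checking the $n$-even case carefully, is the crux. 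Everything else — the module-over-a-real-structure reformulation and the counting of real dimensions — is routine.
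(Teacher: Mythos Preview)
Your approach is essentially the paper's: both pick real bases $\{e_i\}$ of $V^A$ and $\{f_i\}$ of $V^B$, define $\psi$ by $f_i\mapsto e_i$, and then observe that condition~\eref{LinAlg_e} forces $e_1\wedge\cdots\wedge e_n$ and $f_1\wedge\cdots\wedge f_n$ to differ by a real scalar $r\in\R^*$ (indeed, the fixed lines of the two antilinear involutions on $\La^{\top}_{\C}V$ coincide exactly when they commute). Your reformulation of the intertwining condition $\psi=A\psi B$ as ``$\psi$ carries $V^B$ into $V^A$'' is correct and is just a coordinate-free restatement of what the paper does with bases.

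The ``obstacle'' you flag is not an obstacle, and your proposed fix via an $n$-th root is the wrong move. First, a small error: replacing $\psi$ by $\psi\circ(\lambda\,\Id)$ with $\lambda\in\C^*$ does \emph{not} preserve the relation $\psi=A\psi B$ unless $\lambda\in\R$, since $A(\lambda\psi B)=\bar\lambda\,A\psi B$; so your $\lambda^n$-scaling is only available for real $\lambda$, and in the even-$n$, negative-$r$ case no real $n$-th root exists. The clean resolution---which the paper uses and which your own earlier remark about ``replacing $\psi$ by a different real isomorphism'' already contains---is to rescale a \emph{single} basis vector: replace $f_1$ by $r f_1$ (still in $V^B$ since $r\in\R^*$) and keep $\psi(f_i)=e_i$. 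Then $\La^{\top}_{\C}\psi$ sends $r f_1\wedge f_2\wedge\cdots\wedge f_n$ to $e_1\wedge\cdots\wedge e_n=r\,f_1\wedge\cdots\wedge f_n$, i.e.\ $\La^{\top}_{\C}\psi=\Id$. No parity issue arises, and there is nothing further to check in the even case.
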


\begin{proof}
Since $A^2,B^2\!=\!\Id_V$, the isomorphisms $A,B$ are diagonalizable with all eigenvalues $\pm1$.
Since $A,B$ are $\C$-antilinear, we can choose $\C$-bases $\{v_i\}$ and $\{w_i\}$ for~$V$
such~that 
$$A(v_i)=v_i, \quad A(\fI v_i)=-\fI v_i, \qquad B(w_i)=w_i, \quad B(\fI w_i)=-\fI w_i.$$
The $\C$-linear isomorphism $\psi\!:V\!\lra\!V$ defined by $\psi(w_i)\!=\!v_i$
then has the first desired property.\\

\noindent
The automorphisms $\La_{\C}^{\top}A$ and $\La_{\C}^{\top}B$ are $\C$-antilinear and
have one eigenvalue of $+1$ and one of~$-1$.
If~\eref{LinAlg_e} holds, the eigenspaces of $\La_{\C}^{\top}A$ and $\La_{\C}^{\top}B$ 
are the same and~so
$$v_1\!\w_{\C}\!\ldots\!\w_{\C}\!v_n= r\cdot w_1\!\w_{\C}\!\ldots\!\w_{\C}\!w_n
\in \La_{\C}^{\top}V$$
for some $r\!\in\!\R^*$.
Replacing $w_1$ by $rw_1$ in the previous paragraph, we obtain an isomorphism~$\psi$
that also satisfies the second property.
\end{proof}

\begin{proof}[{\bf\emph{Proof of Theorem~\ref{nodalBHH_thm}}}]
By Proposition~\ref{BHH_prp}, we can assume that $(\Si,\si)$ is singular and 
that Theorem~\ref{nodalBHH_thm} holds for all symmetric surfaces~$(\wt\Si,\wt\si)$
with fewer nodes than~$(\Si,\si)$.\\

\noindent
If $(\Si,\si)$ contains a real node~$x$, i.e.~an $H$ or~$E$ node 
as described on page~\pageref{Hnode_it}, 
let $(\wt\Si,\wt\si)$ be the (possibly nodal) symmetric surface obtained from
$(\Si,\si)$ by desingularizing a small neighborhood of~$x$.
The preimage of~$x$ under the natural projection $\wt\Si\!\lra\!\Si$ is then two distinct
points $\wt{x}_1,\wt{x}_2$; $(\Si,\si)$ is obtained from $(\wt\Si,\wt\si)$
by identifying $\wt{x}_1$ with~$\wt{x}_2$ to form the additional node~$x$.
Let $S_1\!=\!\{\wt{x}_1\}$ and $S_2\!=\!\{\wt{x}_2\}$ in this case.
If $(\Si,\si)$ does not contain a real node,  
let $\{x^+,x^-\}$ be a pair of conjugate nodes and 
$(\wt\Si,\wt\si)$ be the (possibly nodal) symmetric surface obtained from
$(\Si,\si)$ by desingularizing small neighborhoods of~$x^+$ and~$x^-$.
The preimage of each point~$x^{\pm}$ under the natural projection $\wt\Si\!\lra\!\Si$ 
is then two distinct points $\wt{x}_1^{\pm},\wt{x}_2^{\pm}$; 
$(\Si,\si)$ is obtained from $(\wt\Si,\wt\si)$
by identifying $\wt{x}_1^+$ with~$\wt{x}_2^+$  and $\wt{x}_1^-$ with~$\wt{x}_2^-$ 
to form the additional nodes~$x^+$ and~$x^-$, respectively.
Let $S_1\!=\!\{\wt{x}_1^+,\wt{x}_1^-\}$ and $S_2\!=\!\{\wt{x}_2^+,\wt{x}_2^-\}$ in this case.\\

\noindent 
Let $\wt{V}_1,\wt{V}_2\!\lra\!\wt\Si$ be complex vector bundles and
$$\vt_1\!:\wt{V}_1\big|_{S_1}\lra \wt{V}_1\big|_{S_2}
\qquad\hbox{and}\qquad 
\vt_2\!:\wt{V}_2\big|_{S_1}\lra \wt{V}_2\big|_{S_2}$$
be isomorphisms of complex vector bundles such~that 
$$V_1=\wt{V}_1\big/\!\!\sim, ~~ v\!\sim\!\vt_1(v)~\forall\,v\!\in\!\wt{V}_1\big|_{S_1},
\qquad\hbox{and}\qquad 
V_2=\wt{V}_2\big/\!\!\sim, ~~ v\!\sim\!\vt_2(v)~\forall\,v\!\in\!\wt{V}_2\big|_{S_1}.$$
Denote by $\wt\vph_1$ and~$\wt\vph_2$ the lift of $\vph_1$ to~$\wt{V}_1$ and 
the lift of $\vph_2$ to~$\wt{V}_2$, respectively.
Thus,  $(\wt{V}_1,\wt\vph_1)$ and $(\wt{V}_2,\wt\vph_2)$ are real bundle pairs over $(\wt\Si,\wt\si)$
that descend to the real bundle pairs $(V_1,\vph_1)$ and $(V_2,\vph_2)$ over~$(\Si,\si)$.
Furthermore,
\BE{nodalBHH_e3}
\vt_i(v)=\begin{cases} \wt\vph_i(\vt_i^{-1}(\wt\vph_i(v))),
&\hbox{if}~|S_1|\!=\!1~\hbox{and}~x~\hbox{is $E$ node};\\
\wt\vph_i(\vt_i(\wt\vph_i(v))), &\hbox{otherwise};
\end{cases}\EE
for all $v\!\in\!\wt{V}_i\big|_{S_1}$.\\

\noindent
Since $(\wt\Si,\wt\si)$ satisfies Theorem~\ref{nodalBHH_thm},
there exists an isomorphism 
$$\wt\Phi\!:(\wt{V}_1,\wt\vph_1)\lra (\wt{V}_2,\wt\vph_2)$$
of real bundle pairs over $(\wt\Si,\wt\si)$.
We show below that there exists
\hbox{$\wt\Psi\!\in\!\GL(\wt{V}_1,\wt\vph_1)$}
so~that 
\BE{nodalBHH_e2b}\wt\Phi\!\circ\!\wt\Psi\circ\vt_1=\vt_2\circ\wt\Phi\!\circ\!\wt\Psi\!:
\wt{V}_1|_{S_1}\lra \wt{V}_2|_{S_2}\,.\EE
This implies that $\wt\Phi\!\circ\!\wt\Psi$
 descends to an isomorphism of real bundles as in~\eref{nodalBHH_e}.\\

\noindent
Suppose $|S_1|\!=\!1$ and $x$ is an $E$ node.
By the first case in~\eref{nodalBHH_e3}, the $\C$-linear isomorphisms
$$\wt\Phi^{-1}\!\circ\!\vt_2^{-1}\!\circ\!\wt\Phi\!\circ\!\wt\vph_1
\!=\!\wt\Phi^{-1}\!\circ\!\vt_2^{-1}\!\circ\!\wt\vph_2\!\circ\!\wt\Phi,
\wt\vph_1\!\circ\!\vt_1\!: \wt{V}_1\big|_{\wt{x}_1}\lra \wt{V}_1\big|_{\wt{x}_1}$$
square to the identity.
By Lemma~\ref{LinAlg_lmm},
there thus exists $\psi\!\in\!\GL(\wt{V}_1|_{\wt{x}_1})$ such~that 
\BE{nodalBHH_e4a} \psi=\wt\Phi^{-1}\!\circ\!\vt_2^{-1}\!\circ\!\wt\Phi\!\circ\!\wt\vph_1
\circ\psi\circ \wt\vph_1\!\circ\!\vt_1\!:\,\wt{V}_1|_{\wt{x}_1}\lra \wt{V}_1|_{\wt{x}_1}\,.\EE
By Lemma~\ref{IsomCextend_lmm}, there exist $\wt\Psi\!\in\!\GL(\wt{V}_1,\wt\vph_1)$
and a neighborhood $U$ of $\wt{x}_1$ in~$\wt\Si$ such~that 
\BE{nodalBHH_e4b} \wt\Psi|_z=
\begin{cases}\psi,&\hbox{if}~z\!=\!\wt{x}_1;\\
\id,&\hbox{if}~z\!\not\in\!U\!\cup\!\si(U);
\end{cases}
\qquad U\!\cap\!\si(U)=\eset.\EE
By~\eref{nodalBHH_e4a} and~\eref{nodalBHH_e4b}, $\wt\Psi$ satisfies~\eref{nodalBHH_e2b}.\\

\noindent
In the two remaining cases, let
$$\psi\!=\!\wt\Phi^{-1}\!\circ\!\vt_2^{-1}\!\circ\!\wt\Phi\!\circ\!\vt_1\!:
\wt{V}_1\big|_{S_1}\lra \wt{V}_1\big|_{S_1}\,.$$
This $\C$-linear automorphism satisfies
\BE{nodalBHH_e7} \wt\Phi\big(\vt_1(v)\big)=\vt_2\big(\wt\Phi(\psi(v))\big)
\qquad\forall~v\!\in\!\wt{V}_1\big|_{S_1}\,.\EE
By the second case in~\eref{nodalBHH_e3}, $\psi\!\circ\!\wt\vph_1\!=\!\wt\vph_1\!\circ\!\psi$.\\

\noindent
Suppose $|S_1|\!=\!1$ and $x$ is an $H$ node.
If~$\wt{x}_1$ and $\wt{x}_2$ lie on different topological components
$\wt\Si^{\wt\si}_1,\wt\Si^{\wt\si}_2$ of~$\wt\Si^{\wt\si}$,
extend~$\psi$ to some $\wt\psi\!\in\!\GL(\wt{V}_1^{\wt\vph_1}|_{\wt\Si^{\wt\si}_1})$.
If~$\wt{x}_1$ and $\wt{x}_2$ lie on the same topological component
$\wt\Si_1^{\wt\si}$ of~$\wt\Si^{\wt\si}$, the $w_1$-assumption applied to 
either of the two circles in the connected component of~$\Si^{\si}$ 
containing~$x$ implies that~$\psi$ is orientation-preserving.
Therefore, it can be extended to some $\wt\psi\!\in\!\SL(\wt{V}_1^{\wt\vph_1}|_{\wt\Si_1^{\wt\si}})$
such that $\wt\psi$ is the identity over~$\wt{x}_2$.
In both cases, there exist $\wt\Psi\!\in\!\GL(\wt{V}_1,\wt\vph_1)$
and a neighborhood $U$ of $\wt\Si^{\wt\si}_1$ in~$\wt\Si$ such~that 
\BE{nodalBHH_e8} \wt\Psi|_z=
\begin{cases}\psi,&\hbox{if}~z\!=\!\wt{x}_1;\\
\id,&\hbox{if}~z\!\not\in\!U;
\end{cases}
\qquad U\!\cap\!\big(\wt\Si^{\wt\si}\!-\!\wt\Si_1^{\wt\si}\big)=\eset;\EE
see Proposition~\ref{IsomExtend_prp}.
By~\eref{nodalBHH_e7} and~\eref{nodalBHH_e8}, $\wt\Psi$ satisfies~\eref{nodalBHH_e2b}.\\

\noindent
Suppose $|S_1|\!=\!2$.
Since $\psi\!\circ\!\wt\vph_1\!=\!\wt\vph_1\!\circ\!\psi$, 
there exist $\wt\Psi\!\in\!\Aut(\wt{V},\wt\vph_1)$  
and a neighborhood $U$ of~$S_1$ in~$\wt\Si$ such~that 
\BE{canIsomExt_e4b} 
\wt\Psi|_z=
\begin{cases}\psi|_{S_1},&\hbox{if}~z\!\in\!S_1;\\
\Id_{\wt{V}_1},&\hbox{if}~z\!\not\in\!U;
\end{cases}
\qquad U\!\cap\!S_2=\eset;\EE
see Lemma~\ref{IsomCextend_lmm}.
By~\eref{nodalBHH_e7} and~\eref{canIsomExt_e4b},
$\wt\Psi$ satisfies~\eref{nodalBHH_e2b}. 
\end{proof}

\section{Homotopies between automorphisms}
\label{Homotopies_sec}

\noindent
In this section, we establish the main statement needed to lift homotopies
in Theorem~\ref{auto_thm}.

\begin{prp}\label{homotopextend_prp}
Let $(\Si,\si)$ be a symmetric surface, $(V,\vph)$ be a real bundle pair
over~$(\Si,\si)$, and $\Psi\!\in\!\SL(V,\vph)$.
If $\Psi|_{V^{\vph}}\!=\!\Id_{V^{\vph}}$, then $\Psi$ is homotopic 
to~$\Id_V$ through automorphisms $\Psi_t\!\in\!\SL(V,\vph)$
such that $\Psi_t|_{V^{\vph}}\!=\!\Id_{V^{\vph}}$.
\end{prp}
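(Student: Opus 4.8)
The plan is to first reduce to the smooth case by the same normalization trick used in the proof of Proposition~\ref{IsomExtend_prp}, then handle the smooth case by a direct obstruction-theoretic argument on a collar of the fixed locus. Concretely, let $\wt\Si\!\to\!\Si$ be the normalization, $(\wt V,\wt\vph)\!\to\!(\wt\Si,\wt\si)$ the induced real bundle pair with descent datum $\vt\!:\wt V|_{S_1}\!\to\!\wt V|_{S_2}$, and $\wt\Psi\!\in\!\SL(\wt V,\wt\vph)$ the lift of $\Psi$. Then $\wt\Psi|_{\wt V^{\wt\vph}}\!=\!\Id$, and we would homotope $\wt\Psi$ to $\Id_{\wt V}$ through $\SL(\wt V,\wt\vph)$; the issue is that the homotopy must remain compatible with $\vt$ at the nodal points. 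As in the proof of Proposition~\ref{IsomExtend_prp}, the lifts produced in the smooth case can be taken to restrict to the identity outside an arbitrarily small neighborhood of $\wt\Si^{\wt\si}$, hence to the identity over $S_C$ and over neighborhoods of the points of $S_E$; and over the $H$-nodes the relevant compatibility is automatic since $\wt\Psi$ restricts to $\Id$ on $\wt V^{\wt\vph}$ and the $H$-nodal points lie on $\wt\Si^{\wt\si}$. (At the isolated nodes one also has to absorb the pointwise isomorphisms via Lemma~\ref{IsomCextend_lmm}, but since $\wt\psi_E$ will be $\Id$ once $\wt\Psi$ is the identity near $S_E$, this is vacuous.) So it suffices to treat smooth $(\Si,\si)$.

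For smooth $(\Si,\si)$ with fixed components $\Si^{\si}_1,\dots,\Si^{\si}_m$, choose a collar parametrization $\Si^{\si}_i\!\times\!(-2,2)\!\to\!\Si$ with $\si(x,\tau)\!=\!(x,-\tau)$, as in the proof of Lemma~\ref{IsomExtend_lmm}, and trivialize $(V,\vph)$ over each collar by $V|_{\Si^{\si}_i}\!\times\!(-2,2)$, with $\vph$ acting as $\wt\vph$ on the first factor and $\tau\!\mapsto\!-\tau$ on the second. Every complex bundle over $\Si^{\si}_i$ is trivial, so $\Psi$ restricted to a collar becomes a map into $\SL_n\C$; restricted to $\Si^{\si}_i\!\times\!0$ it takes values in $\SL_n\R$ and there it equals the identity by hypothesis. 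The first step is to homotope $\Psi$, rel nothing, to an automorphism $\Psi'$ which is the identity on a slightly smaller collar neighborhood of $\Si^{\si}$ and agrees with $\Psi$ outside the collars — this is the core construction. The key point is that the space of $\C$-antilinear trivializations of $\vph$ over $\Si^{\si}_i$, i.e.\ the fiber of $\Psi'$ we need to deform through, forms a contractible (indeed convex after a choice of base trivialization) family, so that $\Psi|_{\Si^{\si}_i\times 0}$, being the identity, can be connected to the constant identity. Then, having made $\Psi'$ the identity on the collars, the difference $\Psi\circ(\Psi')^{-1}$ is supported away from $\Si^{\si}$; in fact we may assume $\Psi$ itself is the identity near $\Si^{\si}$ after this first reduction.

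The second step deals with an automorphism $\Psi$ that equals $\Id$ on an open neighborhood $U$ of $\Si^{\si}$. Now $\si$ acts freely on $\Si\setminus U$ with quotient a surface-with-boundary $\Sigma_0$, and giving a $\vph$-compatible automorphism of $V$ over $\Si\setminus U$ that is $\Id$ near the boundary is the same as giving an automorphism of a complex bundle over $\Sigma_0$ that is $\Id$ near $\partial\Sigma_0$ and has $\det_{\C}\equiv 1$; equivalently an element of $\pi_0$ of the space of maps $(\Sigma_0,\partial\Sigma_0)\!\to\!(\SL_n\C,e)$, and a homotopy of such to $\Id$ rel $\partial$ is a lift of the given map through the nullhomotopy. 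Since $\Sigma_0$ is homotopy equivalent to a wedge of circles, $[(\Sigma_0,\partial\Sigma_0),(\SL_n\C,e)]$ is detected by $\pi_1(\SL_n\C)$ and $\pi_2(\SL_n\C)$, both of which vanish for $n\!\ge\!1$; so $\Psi|_{\Si\setminus U}$ is homotopic rel $\partial$ to the identity. The $\SL$ case is handled precisely because $\det_{\C}$ is locked at $1$ throughout. Extending this homotopy by the identity over $U$ gives the desired path $\Psi_t\!\in\!\SL(V,\vph)$ with $\Psi_t|_{V^{\vph}}\!=\!\Id$ for all $t$.

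\textbf{Main obstacle.} I expect the delicate point to be the first step on the collars: making precise that the identity element of $\SL_n\R\!\subset\!\SL_n\C$ (viewed as the restriction of $\Psi$ to $\Si^{\si}$) can be pushed to the constant identity through automorphisms that are \emph{genuinely $\vph$-equivariant} on the whole collar, not merely on the core circle — i.e.\ organizing the homotopy $H_{t,\tau}$ so that it interpolates between $\Psi$ (at one end of the collar) and $\Id$ (on the core) while the formula $\vph\!\circ\!H_{t,-\tau}\!\circ\!\vph$ glues correctly across $\tau\!=\!0$. This is exactly the bookkeeping carried out in Lemma~\ref{IsomExtend_lmm}, so in practice this step should reduce to citing the techniques there; the genuinely new content is the vanishing of $\pi_1$ and $\pi_2$ of $\SL_n\C$ used in the second step, which is standard.
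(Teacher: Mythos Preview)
Your smooth-case outline is essentially correct and close in spirit to the paper's: both hinge on $\pi_1(\SL_n\C)=\pi_2(\SL_n\C)=0$. The paper works on an oriented half-surface $\wt\Si^b$ doubling to $(\wt\Si,\wt\si)$, handles crosscap boundary components via \cite[Lemma~2.4]{Teh}, and then cuts $\wt\Si^b$ to a disk along arcs; you instead pass to the quotient $\Sigma_0=(\Si\setminus U)/\si$. One imprecision worth flagging: $V$ does not descend to a complex bundle over $\Sigma_0$ (the conjugation $\vph$ is anti-linear), so a $\vph$-equivariant $\SL$-automorphism is a section of an $\SL_n\C$-fiber bundle over $\Sigma_0$ whose structure group may include complex conjugation---not literally a map $(\Sigma_0,\partial\Sigma_0)\to(\SL_n\C,e)$. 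Your conclusion survives because $\pi_1=\pi_2=0$ kills the obstructions regardless of twisting.

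The nodal reduction, however, has a genuine gap. You assert that ``the lifts produced in the smooth case can be taken to restrict to the identity outside an arbitrarily small neighborhood of $\wt\Si^{\wt\si}$, hence to the identity over $S_C$.'' This is false: the homotopy you build must carry $\wt\Psi$ to $\Id_{\wt V}$ \emph{everywhere}, so it cannot be supported near $\wt\Si^{\wt\si}$; your Step~2 explicitly moves $\wt\Psi$ over all of $\wt\Si\setminus U$. You are conflating the construction in Proposition~\ref{IsomExtend_prp}---where a lift of $\psi\in\SL(V^{\vph})$ is built from scratch and can be localized---with the present task of homotoping a \emph{given} $\wt\Psi$ that is nontrivial away from $\wt\Si^{\wt\si}$. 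For the homotopy $\wt\Psi_t$ to descend to $\Si$ one needs $\wt\Psi_t\circ\vt=\vt\circ\wt\Psi_t|_{S_1}$ at every $t$, and at the preimages of $E$- and $C$-nodes (which lie off $\wt\Si^{\wt\si}$) nothing in your argument enforces this. The paper's fix is: first invoke Lemma~\ref{homotoid_lmm} to homotope $\Psi$ to the identity at every $C$-node (at $E$-nodes $\Psi$ is already the identity since $\Psi|_{V^{\vph}}=\Id$ and $\Psi$ is $\C$-linear), and then run every subsequent homotopy on $\wt\Si^b$ \emph{rel} the finite set $S^+=(S_E\cup S_C)\cap\wt\Si^b$, achieved by cutting extra arcs from each point of $S^+$ to $\prt\wt\Si^b$. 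In your quotient framework you would likewise need Lemma~\ref{homotoid_lmm} first, then carry the images of $S_E\cup S_C$ as marked points in $\Sigma_0$ and run the obstruction argument rel $\partial\Sigma_0$ together with those points; only then does the homotopy descend.
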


\begin{proof} Let $n\!=\!\rk_{\C}V$.
By Lemma~\ref{homotoid_lmm} below, we can assume that $\Psi|_x\!=\!\Id_{V_x}$ for 
every $C$~node $x\!\in\!\Si$.
Let 
$$\big(\wt\Si,\wt\si\big)\lra(\Si,\si), \qquad S_E,S_C\subset\wt\Si, \qquad\hbox{and}\quad
\wt\Psi\in\SL(\wt{V},\wt\vph)$$ 
be as in the proof of Proposition~\ref{IsomExtend_prp}.
Let $(\wt\Si^b,\wt{c})$ be an oriented sh-surface which doubles to~$(\wt\Si,\wt\si)$
so that the boundary $\prt\wt\Si^b$ of~$\wt\Si^b$ is disjoint from~$S_E\!\cup\!S_C$
and~set 
$$S^+=\big(S_E\!\cup\!S_C)\cap\wt\Si^b\,.$$ 
Since $\Psi|_x\!=\!\Id_{V_x}$ for every node $x\!\in\!\Si$,  
$\wt\Psi|_{\wt{x}}\!=\!\Id_{\wt{V}_{\wt{x}}}$ for every $\wt{x}\!\in\!S^+$.\\

\noindent
Let $\prt_1\wt\Si^b$ be the union of the boundary components of $(\wt\Si^b,\wt{c})$
with $|c_i|\!=\!1$.
Since $\det\wt\Psi\!=\!1$, 
the restriction of~$\wt\Psi$ to $(\wt{V},\wt\vph)|_{\prt_1\wt\Si^b}$ is homotopic to the identity 
through automorphisms \hbox{$\wt\Phi_t\!\in\!\SL((\wt{V},\wt\vph)|_{\prt_1\wt\Si^b})$};
see \cite[Lemma~2.4]{Teh}.
We extend~$\wt\Phi_t$ over $\wt\Si^b$ as follows. 
Let $\prt_1\wt\Si^b\!\times\!\bI\!\lra\!U$ be a parametrization of 
a (closed) neighborhood~$U$ 
of  $\prt_1\wt\Si^b\!\subset\!\wt\Si^b\!-\!S^+$ with coordinates~$(x,s)$. 
Identifying $\wt{V}|_U$ with $\wt{V}|_{\prt_1\wt\Si^b}\!\times\!\bI$, define
$$\wt\Psi_t\in\SL\big(\wt{V}|_{\wt\Si^b}\big)  \qquad\hbox{by}\quad
\wt\Psi_t|_z=\begin{cases}
\wt\Phi_{(1-s)t}|_x\circ 
\wt\Psi|_x^{-1},&\text{if}~ z=(x,s)\in U\approx \prt_1\wt\Si^b\!\times\!\bI;\\
\Id_{\wt{V}_z},& \text{if}~ z\in \wt\Si^b\!-\!U.
\end{cases}$$
Since $\wt\Psi_t|_{(x,1)}$ is the identity for all~$t$, this map is continuous. 
Moreover, $\wt\Psi_0|_z$ is the identity for all $z\!\in\!\wt\Si^b$ and 
$$\wt\Psi_t|_{(x,0)}=\wt\Phi_t|_x\circ \wt\Psi|_x^{-1}$$ 
is a homotopy between the identity and~$\wt\Psi^{-1}$ on~$\wt{V}|_{\prt_1\wt\Si^b}$.
Thus, $\wt\Psi_t\!\circ\!\wt\Psi$ is a homotopy from $\wt\Psi$ 
over~$\wt\Si^b$ extending~$\wt\Phi_t$.\\

\noindent 
Choose embedded non-intersecting paths~$\{C_i\}$ 
in $\wt\Si^b\!-\!S^+$ with endpoints on~$\prt\wt\Si^b$ 
which cut $\wt\Si^b$ into a disk~$D_0^2$; see Figure~\ref{arcs_fig}.
Choose an embedded path~$\ga_{\wt{x}}$ in~$D^2_0$
from each point $\wt{x}\!\in\!S^+$ to $\prt D_0^2$ so that 
these curves are pairwise disjoint and cut~$D_0^2$ 
into another disk~$D^2$.
Thus, $\prt D^2$ is subdivided into arcs each of which was contained in~$\prt\wt\Si^b$
before the two cuttings, or had precisely the two endpoints in common with~$\prt\wt\Si^b$,
or had one endpoint on~$\prt\wt\Si^b$ and the other in~$S^+$.
Furthermore, every point in~$S^+$ is an endpoint of some arc in~$\prt D^2$.\\

\begin{figure}
\begin{center}
\advance\topskip-10cm
\includegraphics[width=0.3\textwidth,height=150px]{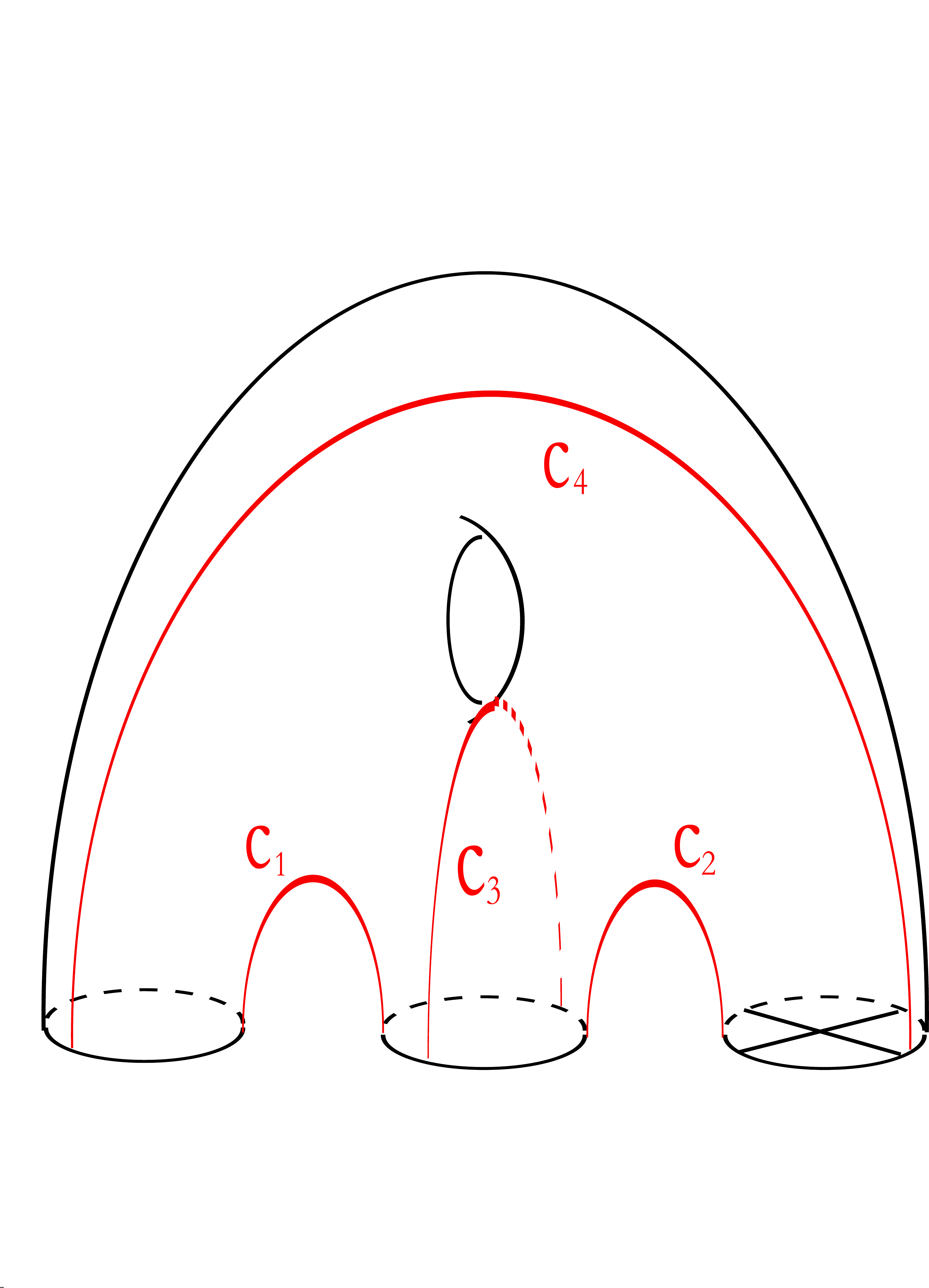}
\end{center}
\caption{The paths $C_1,\ldots,C_4$ cut $\wt\Si^b$ to a disk.}
\label{arcs_fig}
\end{figure}

\noindent
By the assumption  $\Psi|_{V^{\vph}}\!=\!\Id_{V^{\vph}}$ and 
the first two paragraphs, we may assume that $\wt\Psi$ is the identity 
over~$\prt\wt\Si^b$ and~$S^+$.
Since the restriction of~$\wt{V}$ to each~$C_i$ is trivial,
the restriction of~$\wt\Psi$ to~$C_i$ defines an element of 
\BE{pi1SL_e}\pi_1\big(\SL_n\C,I_n\big)\approx\pi_1\big(\SU_n,I_n\big)=0.\EE
Thus, we can homotope~$\wt\Psi$  to the identity over~$C_i$ 
while keeping it fixed at the endpoints. 
Similarly to the second paragraph, this homotopy extends over~$\wt\Si^b$ without 
changing~$\wt\Psi$ over~$\prt\wt\Si^b$, over~$C_j$ for any $C_j\!\neq\!C_i$,
or over~$S^+$.
In particular, this homotopy descends to~$\wt\Si^b$ restricting to 
the identity over~$\prt\wt\Si^b$ and~$S^+$.\\

\noindent
By the previous paragraph, we may assume that $\wt\Psi$ restricts 
to the identity over~$\prt D_0^2$. 
Since the restriction of~$\wt{V}$ to~$\ga_{\wt{x}}$ is trivial for each $\wt{x}\!\in\!S^+$,
the restriction of~$\wt\Psi$ to~$\ga_{\wt{x}}$ defines an element of 
$$\pi_1\big(\SL_n\C,I_n\big)\approx\pi_1\big(\SU_n,I_n\big)=0.$$
Thus, we can homotope~$\wt\Psi$ to the identity over~$\ga_{\wt{x}}$ while keeping it fixed 
at the endpoints. 
Similarly to the second paragraph, this homotopy extends over~$D^2_0$ without 
changing~$\wt\Psi$ over~$\prt D_0^2$ or over $\ga_{\wt{x}'}$ for any $\wt{x}'\!\in\!S^+$
different from~$\wt{x}$.
In particular, this homotopy descends to~$\wt\Si^b$ restricting to 
the identity over~$\prt\wt\Si^b$ and~$S^+$.\\

\noindent
By the previous paragraph, we may assume that $\wt\Psi$ restricts 
to the identity over the boundary~$S^1$ of~$D^2$. 
Since every vector bundle over~$D^2$ is trivial and 
$$\pi_2\big(\SL_n\C,I_n\big)\approx\pi_2\big(\SU_n,I_n\big)=0,$$ 
the map $\wt\Psi\!:(D^2,S^1)\!\lra\!(\SL_n\C,I_n)$
can be homotoped to the identity as a relative map.
Doubling such a homotopy~$\wt\Psi_t$  by the requirement that 
$\wt\Psi_t\!\circ\!\vph\!=\!\vph\!\circ\!\wt\Psi_t$,
we obtain a homotopy~$\wt\Psi_t$ from~$\wt\Psi$
to~$\Id_{\wt{V}}$ through automorphisms $\wt\Psi_t\!\in\!\SL(\wt{V},\wt\vph)$
such that $\wt\Psi_t$ restricts to the identity over~$\wt\Si^{\wt\si}$ and~$S^+$.
Thus, they descend to automorphisms $\Psi_t\!\in\!\SL(V,\vph)$
with the required properties.
\end{proof}

\begin{lmm}\label{homotoid_lmm}
Let $(\Si,\si)$ be a symmetric surface and $(V,\vph)$ be a real bundle pair
over~$(\Si,\si)$.
For every $x\!\in\!\Si\!-\!\Si^{\si}$, an open neighborhood $U\!\subset\!\Si$ of~$x$, 
and $\Psi\!\in\!\SL(V,\vph)$,
there exists a path $\Psi_t\!\in\!\SL(V,\vph)$ such that $\Psi_0\!=\!\Psi$,
$\Psi_1|_x\!=\!\Id_{V_x}$, and $\Psi_t\!=\!\Psi$ on $\Si\!-\!U\!\cup\!\si(U)$.
The same is the case with $\SL(V,\vph)$ replaced by~$\GL(V,\vph)$.
\end{lmm}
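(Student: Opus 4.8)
The plan is to modify $\Psi$ only near $x$ and $\si(x)$, by composing it with a loop of automorphisms supported there, in the spirit of the proof of Lemma~\ref{IsomCextend_lmm}. First I would shrink~$U$: since $x\!\notin\!\Si^{\si}$, I may assume $U\!\cap\!\si(U)\!=\!\eset$ and that $V|_U$ is trivial, $V|_U\!=\!U\!\times\!V_x$ (a small neighborhood of a point, or of a $C$-node, in~$\Si$ is contractible, the remaining node types being excluded by $x\!\notin\!\Si^{\si}$). Any family $\Psi_t$ that works for such a smaller~$U$ works for the original one, since its modification locus then lies in $U\!\cup\!\si(U)$. Next I would fix a continuous $\si$-invariant function $\rho\!:\Si\!\lra\![0,1]$ with $\rho\!\equiv\!0$ near $x$ and near $\si(x)$ and $\rho\!\equiv\!1$ on $\Si\!-\!U\!\cup\!\si(U)$, and, using that $\SL(V_x)\!\approx\!\SL_n\C$ is path-connected and $\Psi|_x\!\in\!\SL(V_x)$ (because ${\det}_{\C}\Psi\!\equiv\!1$), choose a path $h_s\!\in\!\SL(V_x)$, $s\!\in\!\bI$, with $h_0\!=\!\Id_{V_x}$ and $h_1\!=\!(\Psi|_x)^{-1}$.

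Then I would define $\Theta_t\!\in\!\SL(V,\vph)$ by $\Theta_t|_z\!=\!h_{(1-\rho(z))t}$ for $z\!\in\!U$ (under the chosen trivialization), $\Theta_t|_z\!=\!\vph\!\circ\!\Theta_t|_{\si(z)}\!\circ\!\vph$ for $z\!\in\!\si(U)$, and $\Theta_t\!=\!\Id_V$ on $\Si\!-\!U\!\cup\!\si(U)$. These formulas glue to a jointly continuous family because $\rho\!\equiv\!1$ and hence $h_{(1-\rho)t}\!=\!h_0\!=\!\Id$ near $\prt U$ and near $\prt(\si(U))$, where $\vph\!\circ\!\Id\!\circ\!\vph\!=\!\Id_V$ by involutivity of~$\vph$. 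Each $\Theta_t$ commutes with~$\vph$ by construction, is $\C$-linear on $\si(U)$ (two antilinear maps composed with one linear one), and satisfies $\La_{\C}^{\top}\Theta_t\!=\!\Id$ since $\La_{\C}^{\top}h_s\!=\!\Id$ while $(\La_{\C}^{\top}\vph)^2\!=\!\Id$; thus $\Theta_t\!\in\!\SL(V,\vph)$. By construction $\Theta_0\!=\!\Id_V$, $\Theta_t\!=\!\Id_V$ on $\Si\!-\!U\!\cup\!\si(U)$, and $\Theta_1|_x\!=\!(\Psi|_x)^{-1}$.

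Finally I would set $\Psi_t\!=\!\Theta_t\!\circ\!\Psi$. This is a path in $\SL(V,\vph)$ with $\Psi_0\!=\!\Psi$, with $\Psi_1|_x\!=\!\Theta_1|_x\!\circ\!\Psi|_x\!=\!\Id_{V_x}$, and with $\Psi_t\!=\!\Psi$ on $\Si\!-\!U\!\cup\!\si(U)$ since $\Theta_t$ is the identity there. The $\GL(V,\vph)$ version is identical, using that $\GL(V_x)\!\approx\!\GL_n\C$ is also path-connected so that the path $h_s$ still exists; the determinant bookkeeping is then unnecessary. There is no serious obstacle here: the points needing a little care are merely verifying that the conjugated formula on $\si(U)$ patches continuously to the identity outside $U\!\cup\!\si(U)$ and preserves the $\SL$ condition, both of which follow from the choices $h_0\!=\!\Id$ and $\vph\!\circ\!\vph\!=\!\Id_V$.
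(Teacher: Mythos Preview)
Your argument is correct and follows essentially the same approach as the paper: pick a path in $\SL(V_x)$ from $\Id_{V_x}$ to $(\Psi|_x)^{-1}$, spread it out over $U\cup\si(U)$ via a bump function (exactly as in Lemma~\ref{IsomCextend_lmm}), and post-compose with~$\Psi$. The only cosmetic difference is that the paper invokes Lemma~\ref{IsomCextend_lmm} directly to obtain a path~$\Phi_t$ and then sets $\Psi_t=\Phi_t\circ\Phi_0^{-1}\circ\Psi$, whereas you inline that construction with the specific homotopy $H_{t,\tau}=h_{(1-\tau)t}$, which already satisfies $\Theta_0=\Id_V$ and so removes the need for the $\Phi_0^{-1}$ correction.
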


\begin{proof}
Since $\SL(V_x)$ is connected, there exists a~path
$$\psi_{x;t}\in \SL(V_x) \qquad\hbox{s.t.}\quad
\psi_{x;0}=\Id_{V_x},\quad \psi_{x;1}=\Psi^{-1}|_{V_x}\,.$$
By Lemma~\ref{IsomCextend_lmm},
there exists a path $\Phi_t\!\in\!\SL(V,\vph)$ such that $\Phi_t|_x\!=\!\psi_{x;t}$
and $\Phi_t\!=\!\Id$ on $\Si\!-\!U\!\cup\!\si(U)$.
The path $\Psi_t\!=\!\Phi_t\!\circ\!\Phi_0^{-1}\!\circ\!\Psi$ then has the desired properties.
\end{proof}

\section{Classification of automorphisms}
\label{MainPfs_sec}

\noindent
By the next lemma, the first composite homomorphism in~\eref{AutoThm_e} is surjective.
We use it to complete the proof of Theorem~\ref{auto_thm} in this section.

\begin{lmm}\label{RBisom_lmm0}
Let $(\Si,\si)$ be a symmetric surface 
and $(V,\vph)$ be a real bundle pair over~$(\Si,\si)$.
Then the homomorphism
$$\GL(V,\vph)\lra \cC(\Si,\si;\C^*), \qquad \Psi\lra \det\Psi,$$
is surjective.
Furthermore, every path $f_t$ in $\cC(\Si,\si;\C^*)$ passing through 
$\det\Psi$ for some $\Psi\!\in\!\GL(V,\vph)$ lifts to a path 
in $\GL(V,\vph)$ passing through~$\Psi$. 
\end{lmm}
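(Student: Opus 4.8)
The plan is to reduce the surjectivity to the rank-one case, where $\GL(V,\vph)$ is literally $\cC(\Si,\si;\C^*)$, and then handle the lifting of paths by the same device. First I would pick any nowhere-zero section-free argument: write $\La_{\C}^{\top}(V,\vph)$ for the top exterior power, which is a rank-one real bundle pair over $(\Si,\si)$, so that $\cC(\Si,\si;\C^*)$ is canonically $\GL(\La_{\C}^{\top}(V,\vph))$. Given $f\!\in\!\cC(\Si,\si;\C^*)$, the goal is to produce $\Psi\!\in\!\GL(V,\vph)$ with ${\det}_{\C}\Psi\!=\!f$. Since $\GL_n\C\!\lra\!\C^*$ by the determinant is surjective with connected fibers, the naive idea is to realize $f$ by a diagonal automorphism $\mathrm{diag}(f,1,\ldots,1)$ in a local trivialization; the work is in making this globally defined and compatible with~$\vph$.

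The cleanest route is: choose a real bundle pair $(L,\psi_L)$ of rank~1 over $(\Si,\si)$ together with a real bundle pair $(W,\psi_W)$ of rank $n\!-\!1$ such that $(V,\vph)\!\cong\!(L,\psi_L)\!\oplus\!(W,\psi_W)$. Such a splitting need not exist for an arbitrary $(V,\vph)$, so instead I would work locally: cover $\Si$ by $\si$-invariant open sets $\{U_\al\}$ over which $(V,\vph)$ is trivial as a real bundle pair (possible away from and near $\Si^{\si}$ by the local models already used in Lemma~\ref{bndpull_lmm} and \cite[Lemma~2.4]{Teh}), define on each $U_\al$ the automorphism $\Psi_\al\!=\!\mathrm{diag}(f|_{U_\al},1,\ldots,1)$ in the chosen trivialization, and patch the $\Psi_\al$ using a $\si$-invariant partition of unity: set $\Psi\!=\!\sum_\al \rho_\al \Psi_\al$ where $\rho_\al$ is $\si$-invariant. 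The sum of automorphisms is not an automorphism, so this needs the standard fix — instead convexity-average the logarithms, i.e.\ write each $\Psi_\al\!=\!\exp(A_\al)$ with $A_\al$ a traceless-free-part-plus-scalar endomorphism and average the $A_\al$; but a simpler and fully rigorous alternative is to observe that it suffices to produce $\Psi$ with ${\det}_{\C}\Psi$ \emph{homotopic} to $f$ through maps in $\cC(\Si,\si;\C^*)$, and then correct by a single rank-one automorphism of $\La_{\C}^{\top}(V,\vph)$ times the identity — which exists because multiplication by a function in $\cC(\Si,\si;\C^*)$ on the first coordinate of any local trivialization glues (scalar multiplications commute with all transition maps). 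So the real content is just: the scalar automorphism $z\!\mapsto\!f(z)\cdot\id_{V}$ is \emph{not} in $\GL(V,\vph)$ (it is anti-compatible unless $f$ is real), but $z\!\mapsto\!g(z)\cdot\id_V$ with $g\!=\!f^{1/n}$ chosen in $\cC(\Si,\si;\C^*)$ would work if an $n$-th root existed, which it need not. The way around this is exactly the local-diagonal construction above, and the gluing works because all transition functions of $(V,\vph)$ preserve the first coordinate subbundle only locally — hence one cannot globalize directly; one must pass through $\La_{\C}^{\top}$.

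Here is the argument I would actually commit to. Consider the homomorphism ${\det}_{\C}\!:\GL(V,\vph)\!\lra\!\cC(\Si,\si;\C^*)$. I claim it admits a continuous section over each path component, and more: given $f\!\in\!\cC(\Si,\si;\C^*)$, connect it by a path $f_t$ to the image ${\det}_{\C}\Psi_0$ of some fixed $\Psi_0\!\in\!\GL(V,\vph)$ — for instance $\Psi_0\!=\!\Id$, with ${\det}_{\C}\Psi_0\!\equiv\!1$ — provided $f$ is null-homotopic in $\cC(\Si,\si;\C^*)$; in general decompose $\cC(\Si,\si;\C^*)$ into its path components and note that a representative of each component \emph{is} realized as ${\det}_{\C}\Psi$ for an explicit $\Psi$ built from the rank-one real bundle pairs $(\ga_i,\wt\si_i')$ and $(\wt\ga_i,\wt\si_i)$ of Section~\ref{BHH_sec} on a single local model, extended by the identity (this uses the same neighborhood-of-$\Si^{\si}$ gluing as Lemma~\ref{IsomExtend_lmm} and Proposition~\ref{IsomExtend_prp}). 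Granting that each component is hit, surjectivity follows once I prove the path-lifting statement, since then any $f$ can be moved along a path to a realized representative and lifted back. For path-lifting: given a path $f_t$ with $f_0\!=\!{\det}_{\C}\Psi$, I want $\Psi_t\!\in\!\GL(V,\vph)$ with $\Psi_0\!=\!\Psi$ and ${\det}_{\C}\Psi_t\!=\!f_t$. Set $h_t\!=\!f_t/f_0\!\in\!\cC(\Si,\si;\C^*)$, a path from the constant $1$; it suffices to lift $h_t$ to a path $\Theta_t\!\in\!\GL(V,\vph)$ from $\Id$ with ${\det}_{\C}\Theta_t\!=\!h_t$ and then take $\Psi_t\!=\!\Theta_t\!\circ\!\Psi$. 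Because $h_t$ starts at $1$, on any $\si$-invariant trivializing chart I can write $h_t\!=\!\exp(a_t)$ with $a_t$ continuous, $a_0\!=\!0$, and $\ov{a_t(z)}\!=\!a_t(\si(z))$; then $\mathrm{diag}(\exp a_t,1,\ldots,1)$ is the desired local lift, and now the local lifts \emph{can} be averaged (exponentials of a partition-of-unity–weighted sum of the $a_t$'s, corrected by identity in the orthogonal complement), producing a global $\Theta_t\!\in\!\GL(V,\vph)$ whose determinant differs from $h_t$ only by a null-homotopic factor $u_t$ with $u_0\!=\!1$; a final correction by $\mathrm{diag}(u_t,1,\ldots,1)$ built the same way finishes it. The main obstacle is precisely this globalization step — reconciling a diagonal construction that is patently local with the need for a global automorphism commuting with $\vph$ — and the resolution is the standard one: take logarithms, average with a $\si$-invariant partition of unity, and re-exponentiate, which is legitimate because the relevant local logarithms can be chosen to start at $0$ along the path and to satisfy the reality condition over $\Si^{\si}$.
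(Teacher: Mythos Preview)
Your proposal overlooks the key point that makes this lemma a two-line argument in the paper: the splitting you consider and then reject, $(V,\vph)\cong(L,\psi_L)\oplus(W,\psi_W)$ with $\rk_{\C}L=1$, \emph{does} exist. By Theorem~\ref{nodalBHH_thm} (already established in Section~\ref{IsomExist_sec}, before this lemma is invoked), $(V,\vph)$ is isomorphic to $\La^{\top}_{\C}(V,\vph)\oplus(\Si\!\times\!\C^{n-1},\si\!\times\!\fc)$, since both sides have the same rank, the same $w_1$ on the real locus, and the same degree on each irreducible component. Once you have this global splitting, the map $f\mapsto f\,\Id_{\La^{\top}_{\C}V}\oplus\Id$ is a section of the determinant, and path-lifting is obtained by precomposing a given $\Psi$ with $(f_t/f_0)\,\Id_{\La^{\top}_{\C}V}\oplus\Id$. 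That is the paper's entire proof.

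Because you discard this, the argument you ``actually commit to'' carries real gaps. The partition-of-unity patching of local diagonal automorphisms $\mathrm{diag}(f,1,\ldots,1)$ does not yield a well-defined bundle endomorphism (diagonal blocks in different trivializations are not compatible under transition functions), and averaging logarithms of endomorphisms across charts has the same problem: the matrices $A_\al$ live in different fibers of $\mathrm{End}(V)$ related by conjugation, so $\sum\rho_\al A_\al$ is defined, but $\exp(\sum\rho_\al A_\al)$ need not have determinant equal to $\exp(\sum\rho_\al\,\mathrm{tr}A_\al)$ in any useful way, and the ``null-homotopic correction factor $u_t$'' is asserted, not established. Separately, the surjectivity half of your argument reduces to showing that every path component of $\cC(\Si,\si;\C^*)$ contains a realized determinant, which you only sketch (``built from the rank-one real bundle pairs\ldots''); this would require an independent computation of $\pi_0\cC(\Si,\si;\C^*)$ and a matching family of explicit $\Psi$'s. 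None of this is needed once you use Theorem~\ref{nodalBHH_thm} to split off a line summand.
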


\begin{proof}
Let $n\!=\!\rk_{\C}\!V$.
By Theorem~\ref{nodalBHH_thm}, we can assume~that
$$(V,\vph)=\La^{\top}_{\C}(V,\vph) \oplus 
\big(\Si\!\times\!\C^{n-1},\si\!\times\!\fc_{\C^{n-1}}\big).$$
If $f\!\in\!\cC(\Si,\si;\C^*)$, then 
$$\Psi \equiv f\Id_{\La^{\top}_{\C}V} \oplus\Id_{\Si\times\C^{n-1}}\!:(V,\vph)\lra(V,\vph)$$
is an element of $\GL(V,\vph)$ such that $\det\Psi\!=\!f$.
If 
$$\Psi\equiv f\Id_{\La^{\top}_{\C}V} \oplus \Phi$$
is an arbitrarily element of $\GL(V,\vph)$ and  
$f_t$ is a path in $\cC(\Si,\si;\C^*)$ such that $\det\Psi\!=\!f_0$,
then
$$\Psi_t\equiv \frac{f_t}{\det\Phi}\Id_{\La^{\top}_{\C}V} \oplus \Phi$$
is a path in $\GL(V,\vph)$ such that $\Psi_0\!=\!\Psi$ and $\det\Psi_t\!=\!f_t$.
\end{proof}

\begin{proof}[{\bf\emph{Proof of Theorem~\ref{auto_thm}}}]
By Lemma~\ref{RBisom_lmm0} and Proposition~\ref{IsomExtend_prp}, the compositions
\begin{alignat*}{2}
\GL(V,\vph)&\lra \GL'(V,\vph)\lra \cC(\Si,\si;\C^*), &\qquad
\Psi&\lra\det\Psi,\\
\GL(V,\vph)&\lra \GL'(V,\vph)\lra \GL(V^{\vph}), &\qquad
\Psi&\lra\Psi|_{V^{\vph}},
\end{alignat*}
are surjective.
This implies that the projection homomorphisms
$$\GL'(V,\vph)\lra \cC(\Si,\si;\C^*),\GL(V^{\vph})$$ are surjective.\\

\noindent
Suppose  $\Psi,\Phi\!\in\!\GL(V,\vph)$
are such that  $\det\Psi$ and $\det\Phi$ lie in the same path component
of  $\cC(\Si,\si;\C^*)$ and 
$\Psi|_{V^{\vph}}$ and $\Phi|_{V^{\vph}}$ lie in the same path component
of~$\GL(V^{\vph})$.
We will show that $\Psi$ and $\Phi$ lie in the same path component
of~$\GL(V,\vph)$.
By the second statement of Lemma~\ref{RBisom_lmm0}, we may assume~that 
$\det\Psi\!=\!\det\Phi$ and thus
$$\Th\equiv\Phi\!\circ\!\Psi^{-1}\in\SL(V,\vph), \qquad
\psi\!\equiv\!\Th|_{V^{\vph}}\in\SL(V^{\vph}).$$
Since $\Psi|_{V^{\vph}}$ and $\Phi|_{V^{\vph}}$ lie in the same path component
of~$\GL(V^{\vph})$, there exists a path $\psi_t$ in $\SL(V^{\vph})$
from $\psi_0\!\equiv\!\Id_{V^{\vph}}$ to $\psi_1\!\equiv\!\psi$.
By Proposition~\ref{IsomExtend_prp}, there exists a path $\Th_t$ in~$\SL(V,\vph)$
such that $\Th_t|_{V^{\vph}}\!=\!\psi_t$ and $\Th_1\!=\!\Th$.
In particular, $\Phi$ and $\Th_0\!\circ\!\Psi$ lie 
in the same path component of~$\GL(V,\vph)$,
$$\det(\Th_0\!\circ\!\Psi)=\det\Psi, \qquad\hbox{and}\qquad
(\Th_0\!\circ\!\Psi)\big|_{V^{\vph}}=\Psi|_{V^{\phi}}.$$
By Proposition~\ref{homotopextend_prp}, 
$\Psi$ lies in the same path component of~$\GL(V,\vph)$ as
$\Th_0\!\circ\!\Psi$ and~$\Phi$.\\

\noindent
Suppose  $\Psi,\Phi\!\in\!\GL(V,\vph)$ and $(f_t,\psi_t)$ is a path in $\GL'(V,\vph)$
such~that
$$(f_0,\psi_0)=\big(\det\Psi,\Psi|_{V^{\vph}}\big), \qquad
(f_1,\psi_1)=\big(\det\Phi,\Phi|_{V^{\vph}}\big).$$
By the second statement of Lemma~\ref{RBisom_lmm0}, there exists 
a path $\Phi_t\!\in\!\GL(V,\vph)$ such that $\Phi_0\!=\!\Psi$ and 
$\det\Phi_t\!=\!f_t$.
Let
$$\wh\psi_t=\psi_t\circ\{\Phi_t\}^{-1}|_{V^{\vph}}\in\GL(V^{\vph}).$$
Since $\det\psi_t\!=\!f_t|_{\Si^{\si}}$, $\wh\psi_t\!\in\!\SL(V^{\vph})$.
Furthermore, $\wh\psi_0\!=\!\Id|_{V^{\vph}}$.
By Proposition~\ref{IsomExtend_prp}, $\wh\psi_t$ thus extends to a path $\wh\Psi_t$
in~$\SL(V,\vph)$ such that $\wh\Psi_0\!=\!\Id_V$.
Let $\wt\Psi_t\!=\!\wh\Psi_t\!\circ\!\Phi_t$.
In particular, 
$$\wt\Psi_0=\Psi, \quad \det\wt\Psi_t=f_t, \quad \wt\Psi_t\big|_{V^{\vph}}=\psi_t, \qquad
\det\big(\Phi\!\circ\!\wt\Psi_1^{-1}\big)=\Id_V, \quad
\big(\Phi\!\circ\!\wt\Psi_1^{-1}\big)|_{V^{\vph}}=\Id_{V^{\vph}}.$$
By Proposition~\ref{homotopextend_prp}, the last two properties imply that there exists
a path~$\Th_t$ in~$\SL(V,\vph)$ from~$\Id_V$ to $\Phi\!\circ\!\wt\Psi_1^{-1}$
such that $\Th_t|_{V^{\vph}}\!=\!\Id_{V^{\vph}}$.
The path $\Psi_t\!\equiv\!\Th_t\!\circ\!\wt\Psi_t$ in~$\GL(V,\vph)$ runs 
from~$\Psi$ to~$\Phi$ and lifts~$(f_t,\psi_t)$.
\end{proof}

\section{Connections with real Gromov-Witten theory}
\label{realGW_sec}

\noindent
Let $X$ be a topological space.
For real vector bundles $V_1,V_2\!\lra\!X$ of the same rank, 
let $\Isom(V_1,V_2)$ be the space of vector bundle isomorphisms
$$\psi\!:V_1\lra V_2$$ 
covering~$\id_X$.
For any such isomorphism, let
$$\La_{\R}^{\top}\psi\!:\La^{\top}_{\R}V_1\lra \La^{\top}_{\R}V_2$$
be the induced element of $\Isom(\La^{\top}_{\R}V_1,\La^{\top}_{\R}V_2)$.\\

\noindent
Let $(X,\phi)$ be a topological space with an involution.
For real bundle pairs~$(V_1,\vph_1)$ and~$(V_2,\vph_2)$ over~$(X,\phi)$
satisfying the conditions of Theorem~\ref{nodalBHH_thm},
let $\Isom((V_1,\vph_1),(V_2,\vph_2))$ be the space of isomorphisms
$$\Psi\!:(V_1,\vph_1)\lra (V_2,\vph_2)$$ 
of real bundle pairs covering~$\id_X$.
For any such isomorphism, let
$$\La_{\C}^{\top}\Psi\!:\La^{\top}_{\C}(V_1,\vph_1)\lra \La^{\top}_{\C}(V_2,\vph_2)$$
be the induced element of $\Isom(\La^{\top}_{\C}(V_1,\vph_1),\La^{\top}_{\C}(V_2,\vph_2))$.
Define
\begin{equation*}\begin{split}
\Isom'\big((V_1,\vph_1),(V_2,\vph_2)\big)
=\big\{(f,\psi)\!\in\!
\Isom\big(\La^{\top}_{\C}(V_1,\vph_1),\La^{\top}_{\C}(V_2,\vph_2)\big)\!\times\!
\Isom\big(V_1^{\vph_1},V_2^{\vph_2}\big)\!:\,&\\
f|_{\La_{\R}^{\top}V^{\vph_1}}\!=\!\La_{\R}^{\top}\psi&\big\}.
\end{split}\end{equation*}
The next statement is an immediate consequence of Theorems~\ref{nodalBHH_thm} 
and~\ref{auto_thm}.

\begin{crl}[of Theorems~\ref{nodalBHH_thm},\ref{auto_thm}]\label{auto_crl}
Let $(\Si,\si)$ be a (possibly nodal) symmetric surface and 
$(V_1,\vph_1)$ and~$(V_2,\vph_2)$ over~$(X,\phi)$ such that 
$$\rk_{\C}V_1=\rk_{\C}V_2, \qquad
w_1\big(V_1^{\vph_1}\big)=w_1\big(V_2^{\vph_2}\big)\in H^1(\Si^{\si};\Z_2),$$
and $\deg(V_1|_{\Si'})\!=\!\deg(V_2|_{\Si'})$ for each irreducible component $\Si'\!\subset\!\Si$.
Then the maps
\begin{alignat*}{2}
\Isom\big((V_1,\vph_1),(V_2,\vph_2)\big)&\lra 
\Isom'\big((V_1,\vph_1),(V_2,\vph_2)\big), &\quad
&\Psi\lra\big(\La_{\C}^{\top}\Psi,\Psi|_{V_1^{\vph_1}}\big),\\
\Isom'\big((V_1,\vph_1),(V_2,\vph_2)\big)
&\lra \Isom\big(\La^{\top}_{\C}(V_1,\vph_1),\La^{\top}_{\C}(V_2,\vph_2)\big),
&\quad&(f,\psi)\lra f,\\
\Isom'\big((V_1,\vph_1),(V_2,\vph_2)\big)
&\lra \Isom\big(V_1^{\vph_1},V_2^{\vph_2}\big), 
&\quad&(f,\psi)\lra\psi,
\end{alignat*}
are surjective.
Two isomorphisms from $(V_1,\vph_1)$ to $(V_2,\vph_2)$
lie in the same path component of $\Isom((V_1,\vph_1),(V_2,\vph_2))$
if and only if their images in
$$\Isom\big(\La^{\top}_{\C}(V_1,\vph_1),\La^{\top}_{\C}(V_2,\vph_2)\big) 
\qquad\hbox{and}\qquad \Isom\big(V_1^{\vph_1},V_2^{\vph_2}\big)$$
lie in the same path components of the two spaces.
Furthermore, every path $(f_t,\psi_t)$ in the space $\Isom'((V_1,\vph_1),(V_2,\vph_2))$
passing through the images of some 
\hbox{$\Psi,\Phi\!\in\!\Isom((V_1,\vph_1),(V_2,\vph_2))$} lifts to a path 
in $\Isom((V_1,\vph_1),(V_2,\vph_2))$ passing through~$\Psi$ and~$\Phi$. 
\end{crl}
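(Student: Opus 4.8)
The plan is to deduce Corollary~\ref{auto_crl} from Theorems~\ref{nodalBHH_thm} and~\ref{auto_thm} by the standard trick of reducing isomorphisms between two bundle pairs to automorphisms of a single one. First I would invoke Theorem~\ref{nodalBHH_thm}: under the stated numerical hypotheses, $(V_1,\vph_1)$ and $(V_2,\vph_2)$ are isomorphic, so fix once and for all an isomorphism $\Phi_0\!\in\!\Isom((V_1,\vph_1),(V_2,\vph_2))$. Composition with $\Phi_0$ (or with $\Phi_0^{-1}$ on the other side) then gives homeomorphisms
\[
\GL(V_1,\vph_1)\lra \Isom\big((V_1,\vph_1),(V_2,\vph_2)\big),\qquad \Psi\lra \Phi_0\!\circ\!\Psi,
\]
and likewise $\GL(V_1^{\vph_1})\!\lra\!\Isom(V_1^{\vph_1},V_2^{\vph_2})$ via $\psi\!\lra\!(\Phi_0|_{V_1^{\vph_1}})\!\circ\!\psi$ and $\cC(\Si,\si;\C^*)\!\lra\!\Isom(\La^{\top}_{\C}(V_1,\vph_1),\La^{\top}_{\C}(V_2,\vph_2))$ via $f\lra (\La^{\top}_{\C}\Phi_0)\!\cdot\!f$. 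These identifications carry $\Isom'((V_1,\vph_1),(V_2,\vph_2))$ onto $\GL'(V_1,\vph_1)$ compatibly with the three maps in the statement and with the map $\Psi\lra(\La^{\top}_{\C}\Psi,\Psi|_{V_1^{\vph_1}})$, because $\La^{\top}_{\C}$ and restriction to the fixed locus are functorial under composition. Hence the whole diagram in the Corollary is identified with the diagram~\eref{AutoThm_e} for $(V,\vph)\!=\!(V_1,\vph_1)$.

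Next I would transport each assertion across this identification. Surjectivity of the three maps, the path-component criterion, and the path-lifting property in the Corollary become, verbatim, the corresponding statements of Theorem~\ref{auto_thm} for $(V_1,\vph_1)$: two isomorphisms $\Psi,\Phi$ lie in the same path component of $\Isom$ iff $\Phi_0^{-1}\!\circ\!\Psi$ and $\Phi_0^{-1}\!\circ\!\Phi$ do in $\GL(V_1,\vph_1)$, and the translation of a path is a path, so nothing is lost. The one point requiring a line of care is matching $\SL(V,\vph)$-type data: the constraint defining $\GL'$ is $f|_{X^{\phi}}\!=\!{\det}_{\R}\psi$, whereas the constraint defining $\Isom'$ is $f|_{\La^{\top}_{\R}V^{\vph_1}}\!=\!\La^{\top}_{\R}\psi$; under the identification above these correspond precisely because ${\det}_{\R}$ of a composite is the product of the ${\det}_{\R}$'s and $\La^{\top}_{\R}\Phi_0|_{V_1^{\vph_1}}$ is exactly the restriction of $\La^{\top}_{\C}\Phi_0$ to the fixed locus (using $\La^{\top}_{\R}(V^{\vph})\!=\!(\La^{\top}_{\C}V)^{\La^{\top}_{\C}\vph}$).

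Since everything is a routine transport of structure, there is essentially no obstacle; the only thing to be mildly careful about is that the identification $\GL(V_1,\vph_1)\!\cong\!\Isom((V_1,\vph_1),(V_2,\vph_2))$ is a homeomorphism (so that path components and paths correspond), which is immediate because left composition with the fixed isomorphism $\Phi_0$ and with its inverse are mutually inverse continuous maps in the compact-open topology. I would therefore present the proof as: fix $\Phi_0$ by Theorem~\ref{nodalBHH_thm}; observe the three displayed homeomorphisms and that they intertwine the relevant maps and the subspaces $\GL'$ and $\Isom'$; conclude that each clause of the Corollary is the image under this dictionary of the matching clause of Theorem~\ref{auto_thm}.
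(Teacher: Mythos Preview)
Your proposal is correct and is exactly the intended deduction: the paper gives no detailed proof at all, stating only that the corollary ``is an immediate consequence of Theorems~\ref{nodalBHH_thm} and~\ref{auto_thm},'' and your transport-of-structure argument via a fixed $\Phi_0$ supplied by Theorem~\ref{nodalBHH_thm} is precisely how one makes that immediacy explicit. The care you take in matching the $\GL'$ and $\Isom'$ constraints is appropriate and correct.
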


\noindent
A special case of this corollary underpins the perspective on orientability
in real Gromov-Witten theory and for orienting naturally twisted determinants
of Fredholm operators on real bundle pairs over symmetric surfaces
introduced in~\cite{RealOrient} and built upon in~\cite{RealGWsI,RealGWsII}.
This perspective motivated the following definition.

\begin{dfn}[{\cite[Definition~5.1]{RealGWsI}}]\label{realorient_dfn4}
Let $(X,\phi)$ be a topological space with an involution and 
$(V,\vph)$ be a real bundle pair over~$(X,\phi)$.
A \sf{real orientation} on~$(V,\vph)$ consists~of 
\begin{enumerate}[label=(RO\arabic*),leftmargin=*]

\item\label{LBP_it2} a rank~1 real bundle pair $(L,\wt\phi)$ over $(X,\phi)$ such that 
\BE{realorient_e4}
w_2(V^{\vph})=w_1(L^{\wt\phi})^2 \qquad\hbox{and}\qquad
\La_{\C}^{\top}(V,\vph)\approx(L,\wt\phi)^{\otimes 2},\EE

\item\label{isom_it2} a homotopy class of isomorphisms of real bundle pairs in~\eref{realorient_e4}, and

\item\label{spin_it2} a spin structure~on the real vector bundle
$V^{\vph}\!\oplus\!2(L^*)^{\wt\phi^*}$ over~$X^{\phi}$
compatible with the orientation induced by~\ref{isom_it2}.\\ 

\end{enumerate}
\end{dfn}

\noindent
We recall that a \sf{spin structure} on a rank~$n$ oriented real vector bundle 
$W\!\lra\!X^{\phi}$ with a Riemannian metric is a $\Spin_n$ principal bundle
$\wt\Fr(W)\!\lra\!X^{\phi}$ factoring through a double cover $\wt\Fr(W)\!\lra\!\Fr(W)$
equivariant with respect to the canonical homomorphism $\Spin_n\!\lra\!\SO_n$.
Since $\SO_n$ is a deformation retract of the identity component $\GL_n^+\R$ of~$\GL_n\R$,
this notion is independent of the choice of the metric on~$W$.
If $X^{\phi}$ is a CW-complex, a spin structure on~$W$ is equivalent to 
a trivialization of~$W$ over the 2-skeleton of~$X^{\phi}$.\\

\noindent
An isomorphism~$\Th$ in~\eref{realorient_e4} restricts to an isomorphism 
\BE{realorient2_e3}\La_{\R}^{\top}V^{\vph}\approx (L^{\wt\phi})^{\otimes2}\EE
of real line bundles over~$X^{\phi}$.
Since the vector bundles $(L^{\wt\phi})^{\otimes2}$ and $2(L^*)^{\wt\phi^*}$ are canonically oriented, 
$\Th$~determines orientations on $V^{\vph}$ and $V^{\vph}\!\oplus\! 2(L^*)^{\wt\phi^*}$.
We will call them \textsf{the orientations determined by~\ref{isom_it2}}
if $\Th$ lies in the chosen homotopy class.
An isomorphism~$\Th$ in~\eref{realorient_e4} also induces an isomorphism 
\BE{detInd_e}\begin{split}
\La_{\C}^{\top}\big(V\!\oplus\!2L^*,\vph\!\oplus\!2\wt\phi^*\big)
&\approx \La_{\C}^{\top}(V,\vph)\otimes(L^*,\wt\phi^*)^{\otimes 2}\\
&\approx (L,\wt\phi)^{\otimes 2}\otimes(L^*,\wt\phi^*)^{\otimes 2}
\approx \big(\Si\!\times\!\C,\si\!\times\!\fc\big),
\end{split}\EE
where the last isomorphism is the canonical pairing.
We will call the homotopy class of isomorphisms~\eref{detInd_e} induced by 
the isomorphisms~$\Th$ in~\ref{isom_it2} \textsf{the homotopy class determined by~\ref{isom_it2}}.\\

\noindent
By the above,
a real orientation on a rank~$n$ real bundle pair~$(V,\vph)$ over
a symmetric surface  $(\Si,\si)$ determines a topological component
of the~space
\begin{equation*}\begin{split}
&\Isom'\big((V\!\oplus\!2L^*,\vph\!\oplus\!2\wt\phi^*),
(\Si\!\times\!\C^{n+2},\si\!\times\!\fc)\big)\\
&\quad\subset
\Isom'\big(\La_{\C}^{\top}(V\!\oplus\!2L^*,\vph\!\oplus\!2\wt\phi^*),
\La_{\C}^{\top}(\Si\!\times\!\C^{n+2},\si\!\times\!\fc)\big)
\times
\Isom'\big((V\!\oplus\!2L^*)^{\vph\oplus2\wt\phi^*},
(\Si\!\times\!\C^{n+2})^{\si\times\fc}\big).
\end{split}\end{equation*}
The next proposition, established for smooth and one-nodal symmetric surfaces 
in~\cite{RealGWsI} and for symmetric surfaces with one pair of conjugate nodes
in~\cite{RealGWsII},  is thus a special case of Corollary~\ref{auto_crl}.

\begin{prp}\label{canonisom_prp}
Let $(\Si,\si)$ be a symmetric surface and
$(V,\vph)$ be a rank~$n$ real bundle pair over $(\Si,\si)$.
A real orientation on $(V,\vph)$
determines a homotopy class of isomorphisms
$$ \Psi\!: \big(V\!\oplus\!2L^*,\vph\!\oplus\!2\wt\phi^*\big)
\approx\big(\Si\!\times\!\C^{n+2},\si\!\times\!\fc\big)$$
of real bundle pairs over $(\Si,\si)$.
An isomorphism~$\Psi$ belongs to this homotopy class if and only~if
the restriction of $\Psi$ to the real locus induces the chosen spin structure~\ref{spin_it2} 
and  the isomorphism 
$$\La_{\C}^{\top}\Psi\!: \La_{\C}^{\top}\big(V\!\oplus\!2L^*,\vph\!\oplus\!2\wt\phi^*\big)
\lra \La_{\C}^{\top}\big(\Si\!\times\!\C^{n+2},\si\!\times\!\fc\big)
=\big(\Si\!\times\!\C,\si\!\times\!\fc\big)$$
lies in the homotopy class determined by~\ref{isom_it2}.
\end{prp}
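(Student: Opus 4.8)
The plan is to deduce the proposition by applying Corollary~\ref{auto_crl} to the two real bundle pairs $(V\!\oplus\!2L^*,\vph\!\oplus\!2\wt\phi^*)$ and $(\Si\!\times\!\C^{n+2},\si\!\times\!\fc)$ over~$(\Si,\si)$, so the first task is to verify that these satisfy the hypotheses of that corollary. Both have complex rank~$n\!+\!2$. The existence of the spin structure~\ref{spin_it2} forces $V^{\vph}\!\oplus\!2(L^*)^{\wt\phi^*}$ to be orientable, and since $w_1\big(V^{\vph}\!\oplus\!2(L^*)^{\wt\phi^*}\big)\!=\!w_1(V^{\vph})$, this gives $w_1\big((V\!\oplus\!2L^*)^{\vph\oplus2\wt\phi^*}\big)\!=\!0$, matching $w_1$ of the real locus of the trivial pair. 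Finally, the isomorphism $\La_{\C}^{\top}(V,\vph)\!\approx\!(L,\wt\phi)^{\otimes2}$ in~\ref{LBP_it2} yields $\La_{\C}^{\top}(V|_{\Si'})\!\cong\!(L|_{\Si'})^{\otimes2}$, hence $\deg(V|_{\Si'})\!=\!2\deg(L|_{\Si'})$, for every irreducible component $\Si'\!\subset\!\Si$; therefore $\deg\big((V\!\oplus\!2L^*)|_{\Si'}\big)\!=\!\deg(V|_{\Si'})\!-\!2\deg(L|_{\Si'})\!=\!0$, matching the trivial pair on each component. Thus Corollary~\ref{auto_crl} applies, and Theorem~\ref{nodalBHH_thm} moreover guarantees that the relevant isomorphism space is non-empty.

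Granting this, I would next recall from the discussion preceding the proposition that a real orientation on~$(V,\vph)$ singles out a path component $\mathcal{H}_1$ of $\Isom\big(\La_{\C}^{\top}(V\!\oplus\!2L^*,\vph\!\oplus\!2\wt\phi^*),(\Si\!\times\!\C,\si\!\times\!\fc)\big)$ — namely the homotopy class of the isomorphisms~\eref{detInd_e} determined by~\ref{isom_it2} — together with a path component $\mathcal{H}_2$ of $\Isom\big((V\!\oplus\!2L^*)^{\vph\oplus2\wt\phi^*},\Si^{\si}\!\times\!\R^{n+2}\big)$ — namely the homotopy class of trivializations of $V^{\vph}\!\oplus\!2(L^*)^{\wt\phi^*}$ corresponding to the spin structure~\ref{spin_it2} and to the orientation determined by~\ref{isom_it2}. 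The compatibility clause built into~\ref{spin_it2}, via~\eref{realorient2_e3} and the interpretation of~\eref{detInd_e}, is precisely what guarantees that $\mathcal{H}_1$ and $\mathcal{H}_2$ arise from a single path component $\mathcal{C}$ of $\Isom'\big((V\!\oplus\!2L^*,\vph\!\oplus\!2\wt\phi^*),(\Si\!\times\!\C^{n+2},\si\!\times\!\fc)\big)$. By the surjectivity statement of Corollary~\ref{auto_crl}, $\mathcal{C}$ lies in the image of $\Isom\big((V\!\oplus\!2L^*,\vph\!\oplus\!2\wt\phi^*),(\Si\!\times\!\C^{n+2},\si\!\times\!\fc)\big)$, and by the path-component criterion of the corollary the preimage of~$\mathcal{C}$ is a single non-empty path component; this is the homotopy class of isomorphisms~$\Psi$ asserted by the proposition.

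It then remains to identify that class by the two stated conditions. An isomorphism~$\Psi$ lies in it exactly when its image in $\Isom'$ lies in~$\mathcal{C}$, i.e.\ exactly when $\La_{\C}^{\top}\Psi\!\in\!\mathcal{H}_1$ and $\Psi|_{(V\oplus2L^*)^{\vph\oplus2\wt\phi^*}}\!\in\!\mathcal{H}_2$. The first of these is, by definition of~$\mathcal{H}_1$, the condition that $\La_{\C}^{\top}\Psi$ lie in the homotopy class determined by~\ref{isom_it2}. For the second, I would invoke the fact recalled before the proposition — that over the $1$-complex $\Si^{\si}$ a spin structure on a vector bundle is the same datum as a homotopy class of trivializations over its $2$-skeleton, the compatible orientation being part of the package since a spin structure determines an orientation — to conclude that $\Psi$ restricted to the real locus induces the chosen spin structure~\ref{spin_it2} if and only if $\Psi|_{(V\oplus2L^*)^{\vph\oplus2\wt\phi^*}}\!\in\!\mathcal{H}_2$. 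The only genuinely non-formal points in the argument are this last translation and the verification that $\mathcal{H}_1$ and $\mathcal{H}_2$ are compatible (i.e.\ glue to a component of $\Isom'$); everything else is a direct transcription of Corollary~\ref{auto_crl} once its hypotheses are in place.
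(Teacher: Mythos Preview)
Your proposal is correct and follows exactly the paper's approach: the paper states that the proposition ``is thus a special case of Corollary~\ref{auto_crl}'' after observing in the preceding paragraph that a real orientation determines a topological component of $\Isom'\big((V\!\oplus\!2L^*,\vph\!\oplus\!2\wt\phi^*),(\Si\!\times\!\C^{n+2},\si\!\times\!\fc)\big)$. You have simply spelled out in detail the verification of the hypotheses of Corollary~\ref{auto_crl} and the translation between the data of the real orientation and the component of $\Isom'$, which the paper leaves implicit.
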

 
\noindent
A \sf{symmetric Riemann surface} $(\Si,\si,\fj)$ is a symmetric surface with
a complex structure~$\fj$ on~$\Si$ such that $\si^*\fj\!=\!-\fj$.
A \textsf{real Cauchy-Riemann operator} on a real bundle pair $(V,\vph)$ 
over such a surface is a linear map of the~form
\begin{equation*}\begin{split}
D=\dbar_V\!+\!A\!: \Ga(\Si;V)^{\vph}
\equiv&\big\{\xi\!\in\!\Ga(\Si;V)\!:\,\xi\!\circ\!\si\!=\!\vph\!\circ\!\xi\big\}\\
&\hspace{.1in}\lra
\Ga_{\fj}^{0,1}(\Si;V)^{\vph}\equiv
\big\{\ze\!\in\!\Ga(\Si;(T^*\Si,\fj)^{0,1}\!\otimes_{\C}\!V)\!:\,
\ze\!\circ\!\tnd\si=\vph\!\circ\!\ze\big\},
\end{split}\end{equation*}
where $\dbar_V$ is the holomorphic $\dbar$-operator for some holomorphic structure in~$V$ and  
$$A\in\Ga\big(\Si;\Hom_{\R}(V,(T^*\Si,\fj)^{0,1}\!\otimes_{\C}\!V) \big)^{\vph}$$ 
is a zeroth-order deformation term. 
Let $\dbar_{\Si;\C}$ denote the real Cauchy-Riemann operator on 
the trivial rank~1 real bundle
\hbox{$(\Si\!\times\!\C,\si\!\times\fc)$}
with the standard holomorphic structure and $A\!=\!0$.
Any real Cauchy-Riemann operator~$D$ on a real bundle pair
 is Fredholm in the appropriate completions.
We denote~by
$$\det D\equiv\La_{\R}^{\top}(\ker D) \otimes \big(\La^{\top}_{\R}(\text{cok}\,D)\big)^*$$
its \textsf{determinant line}.\\

\noindent
If $(X,\phi)$ is a topological space with an involution,
a \sf{real map} $u\!:(\Si,\si)\!\lra\!(X,\phi)$ is a continuous map $u\!:\Si\!\lra\!X$
such that $u\!\circ\!\si\!=\!\phi\!\circ\!u$.
Such a map pulls back a real bundle pair $(V,\vph)$ over $(X,\phi)$
to a real bundle $u^*(V,\vph)$ over $(\Si,\si)$
and a real orientation on the former to a real orientation on the latter.
By Proposition~\ref{canonisom_prp}, a real orientation on a rank~$n$ real bundle 
pair~$(V,\vph)$ over $(X,\phi)$ thus determines an orientation on 
the \sf{relative determinant}
\BE{fDdfn_e}
\rdet\,D_u\equiv \big(\!\det D_u\big)\otimes\big(\!\det\dbar_{\Si;\C}\big)^{\otimes n}\EE
for every real Cauchy-Riemann operator~$D_u$ 
on the real bundle pair $u^*(V,\vph)$ over $(\Si,\si)$
for every real map $u\!:(\Si,\si)\!\lra\!(X,\phi)$.
This observation plays a central role in the construction of positive-genus real Gromov-Witten
invariants in~\cite{RealGWsI}.
If $\Si$ is of genus~0, $\det\dbar_{\Si;\C}$ has a canonical orientation and 
an orientation on~$\rdet\,D_u$ is canonically equivalent to an orientation \hbox{on~$\det D_u$}.
In particular, \cite[Theorem~1.3]{Teh} is essentially equivalent to 
the case of this observation
with $\Si\!=\!\P^1$ and $\si$ being an involution without fixed points.\\

\noindent
The analogue of the last factor in~\eref{fDdfn_e} in the case of bordered surfaces
is canonically oriented.
Thus, the role of the  real orientations of Definition~\ref{realorient_dfn4}
for Cauchy-Riemann operators in real Gromov-Witten theory is analogous
to that of the relative spin structures of \cite[Definition~8.1.2]{FOOO} 
in open Gromov-Witten theory.\\

\vspace{.5in}

\noindent
{\it  Institut de Math\'ematiques de Jussieu - Paris Rive Gauche,
Universit\'e Pierre et Marie Curie, 
4~Place Jussieu,
75252 Paris Cedex 5,
France\\
penka.georgieva@imj-prg.fr}\\

\noindent
{\it Department of Mathematics, Stony Brook University, Stony Brook, NY 11794\\
azinger@math.stonybrook.edu}\\

\end{document}